\theoremstyle{plain}
\newcommand{\newreptheorem}[2]{\newtheorem*{rep@#1}{\rep@title}\newenvironment{rep#1}[1]{\def\rep@title{#2 \ref*{##1}}\begin{rep@#1}}{\end{rep@#1}}}
\newtheorem{theorem}{Theorem}
\newtheorem*{theorem-non}{Theorem}
\newtheorem*{non-lemma}{Lemma}
\newtheorem{lemma}[theorem]{Lemma}
\newtheorem{corr}[theorem]{Corollary}
\theoremstyle{definition}
\newtheorem{remark}[theorem]{Remark}
\DeclareMathOperator{\Tr}{Tr}
\DeclareMathOperator{\supp}{supp}
\DeclareMathOperator{\Var}{Var}
\DeclareMathOperator{\Lk}{Lk}
\DeclareMathOperator{\argmin}{arg\,min}
\DeclareMathOperator{\Image}{im}
\begin{document}
\title{Coboundary expansion for the union of determinantal hypertrees}
\author{Andr\'as M\'esz\'aros\thanks{HUN-REN Alfr\'ed R\'enyi Institute of Mathematics, Budapest, Hungary,\\ {\tt meszaros@renyi.hu}
}}
\date{}
\maketitle
\begin{abstract}
We prove that for any large enough constant $k$, the union of $k$ independent $d$-dimensional determinantal hypertrees is a coboundary expander with high probability.
\end{abstract}

\section{Introduction}

\emph{Determinantal hypertrees} are natural higher dimensional analogues of a uniform random spanning tree of a  complete graph. A $d$-dimensional simplicial complex $T$ on the vertex set $[n]=\{1,2,\dots,n\}$ is called a ($d$-dimensional) hypertree, if
\begin{enumerate}[\hspace{30pt}(a)]
    \item $T$ has complete $d-1$-skeleton;
    \item The number of $d$-faces of $K$ is ${n-1}\choose{d}$;
    \item The integral homology group $H_{d-1}(T)$ is finite.
\end{enumerate}

Kalai's generalization of Cayley's formula \cite{kalai1983enumeration} states that
\[\sum |H_{d-1}(T)|^2=n^{{n-2}\choose {d}},\]
where the summation is over all the  hypertrees $T$ on the vertex set $[n]$. This formula suggests that the natural probability measure on the set of  hypertrees is the one where the probability assigned to a hypertree $T$ is \begin{equation}\label{measuredef}
    \frac{|H_{d-1}(T)|^2}{n^{{n-2}\choose {d}}}.
\end{equation}
It turns out that this measure is a determinantal probability measure \cite{lyons2003determinantal,hough2006determinantal}. More specifically, it is the determinantal measure corresponding to the orthogonal projection to the subspace of coboundaries $B^d(\Delta_{[n]},\mathbb{R})$, where $\Delta_{[n]}$ is the simplex on the vertex set $[n]$. General random determinantal complexes were investigated by Lyons \cite{lyons2009random}. For some recent results on determinantal hypertrees, see \cite{kahle2022topology,meszaros2022local,werf2022determinantal,linial2019enumeration,meszaros20242}.

Kahle and Newman \cite{kahle2022topology} asked about the expansion properties of determinantal hypertrees. For graphs, there are several equivalent definitions of expansion. Each of these suggests an extension to higher dimensional simplicial complexes. However, the arising notions are no longer equivalent. Thus, in higher dimensions, we have several notions of expansion \cite{lubotzky2018high}. In this paper, we consider the notion of \emph{coboundary expansion}. This notion was (implicitly) defined by Linial-Meshulam \cite{linial2006homological}  and Gromov \cite{gromov2010singularities}. Gromov proved that coboundary expansion implies a certain overlapping property of continuous images of the complex. Other applications include coding theory \cite{codes1,codes2,codes3} and property testing \cite{kaufman2014high}.  

There are slight variations of the definition of coboundary expansion, we follow \cite{lubotzky2018high}. First, we need to review a few notions from cohomology theory. Let $\mathbb{F}$ be a field, for us only the cases $\mathbb{F}=\mathbb{R}$ or $\mathbb{F}_2$ will be important. Given a $d$-dimensional simplicial complex $K$, let $K(i)$ be the set of $i$ dimensional faces of $K$, and let $C^i(K,\mathbb{F})$ be the $\mathbb{F}$-vector space of functions from $K(i)$ to $\mathbb{F}$. The coboundary maps
\[\delta_{i,\mathbb{F}}:C^{i}(K,\mathbb{F})\to C^{i+1}(K,\mathbb{F}) \]
are defined the usual way, see for example \cite{lubotzky2018high}. If the complex $K$ is not clear from the context, we write $\delta_{i,\mathbb{F}}^K$. The space of $i$-coboundaries is defined as $B^i(K,\mathbb{F})=\Image \delta_{i-1,\mathbb{F}}$.

Next we define a norm on $C^i(K,\mathbb{F}_2)$. We first define the weight $w(\sigma)$ of an $i$-dimensional face $\sigma$ as the number of top dimensional faces containing $\sigma$ divided by a normalizing factor, that is,
\[w(\sigma)=\frac{1}{{{d+1}\choose{i +1}}|K(d)|}|\{\tau\in K(d)\,:\,\sigma\subset \tau\}|.\]
The normalizing factor is chosen such that the total weight of the $i$-dimensional faces is $1$.

Then, for $f\in C^i(K,\mathbb{F}_2)$, we define
\[\|f\|=\sum_{\sigma\in\supp f} w(\sigma).\]
We write $\|\cdot\|_K$, when $K$ is not clear from the context.

We can also extend this norm to cosets of $B^i(K,\mathbb{F}_2)$ by setting
\[\|f+B^i(K,\mathbb{F}_2)\|=\min_{g\in f+B^i(K,\mathbb{F}_2)} \|g\|.\]

Next, for $0\le i\le d-1$, we define the $i$th coboundary expansion constant of $K$ as
\begin{equation}\label{hidef}
    \mathfrak{h}_i(K)=\min_{f\in C^i(K,\mathbb{F}_2)\setminus B^i(K,\mathbb{F}_2)} \frac{\|\delta_i f\|}{\|f+B^i(K,\mathbb{F}_2)\|}.
\end{equation}
Finally, the coboundary expansion constant of $K$ is defined as
\[\mathfrak{h}(K)=\min_{i=0}^{d-1} \mathfrak{h}_i(K).\]

Now we are able to state our main theorem. Let us fix a dimension $d\ge 1$.  Let $T_{n,d}$ be a $d$-dimensional determinantal hypertree on the vertex set $[n]$ distributed according to \eqref{measuredef}. Let $k$ be a sufficiently large constant depending on $d$, and let $T_{n,d}^1,T_{n,d}^2,\dots,T_{n,d}^k$ be i.i.d. copies of~$T_{n,d}$, and let
\[K_{n,d,k}=\cup_{i=1}^k T_{n,d}^i.\]

\begin{theorem}\label{thm1}
There are $k_d$ and $\zeta_d>0$ such that for any constant $k\ge k_d$, we have
\[\lim_{n\to \infty} \mathbb{P}(\mathfrak{h}(K_{n,d,k})\ge \zeta_d)=1.\]

\end{theorem}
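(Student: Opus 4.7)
The plan is to bound $\mathfrak{h}_i(K_{n,d,k})$ separately for each $i \in \{0,\dots,d-1\}$, with the top-dimensional case $i = d-1$ carrying the bulk of the difficulty. The crucial structural fact is that each $T^j_{n,d}$ has complete $(d-1)$-skeleton, and hence so does $K_{n,d,k}$. Consequently the cochain groups $C^j(K_{n,d,k},\mathbb{F}_2)$ for $j \le d-1$ coincide with those of $\Delta^{(d)}_{[n]}$ and the maps $\delta_{j,\mathbb{F}_2}$ for $j \le d-2$ are identical; in particular $B^{d-1}(K_{n,d,k},\mathbb{F}_2) = B^{d-1}(\Delta^{(d)}_{[n]},\mathbb{F}_2)$. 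All the randomness enters through the set $K_{n,d,k}(d)$: it affects $\|\delta_{d-1}f\|_K$, which sees only the $d$-faces actually present, and the weighted norms on lower-dimensional faces through the degrees $\deg_K(\sigma)=|\{\tau\in K(d):\sigma\subset\tau\}|$.

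For $0 \le i \le d-2$, I would first establish uniform degree concentration: with high probability, for every $j$-face $\sigma$ with $j \le d-1$ the degree $\deg_K(\sigma)$ is within a constant factor of its mean. Since the determinantal measure defining $T_{n,d}$ is negatively associated, this follows from Chernoff-type bounds together with a union bound over the $O(n^d)$ faces. Degree concentration makes $\|\cdot\|_K$ and $\|\cdot\|_\Delta$ equivalent up to $d$-dependent constants on $C^i$ and $C^{i+1}$, so the known lower bound for $\mathfrak{h}_i(\Delta^{(d)}_{[n]})$ (Meshulam--Wallach, Gromov) transfers to give $\mathfrak{h}_i(K_{n,d,k})\ge c_d>0$.

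For $i=d-1$, degree concentration on $(d-1)$-faces analogously gives $\|f\|_K \asymp \|f\|_\Delta$ for any minimum-norm representative $f$ of a coset modulo $B^{d-1}$. Combined with the coboundary expansion of $\Delta^{(d)}_{[n]}$, the theorem in this dimension reduces to the pseudorandom hitting property: with high probability, every minimum-norm $f \notin B^{d-1}$ satisfies
\[
|\supp(\delta^\Delta_{d-1} f)\cap K(d)| \ge c_d \cdot |\supp(\delta^\Delta_{d-1} f)| \cdot \frac{|K(d)|}{|\Delta_{[n]}(d)|}.
\]
When $\|f+B^{d-1}\|_\Delta$ is above a small constant threshold, $|\supp(\delta^\Delta_{d-1} f)|$ is a positive fraction of $\binom{n}{d+1}$, and the estimate follows from negative-association concentration combined with a chaining argument over coset representatives that exploits the linear structure of $B^d(\Delta_{[n]})$ to control the effective union bound.

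The main obstacle is the small-norm regime, in which $|\supp(\delta^\Delta_{d-1} f)|$ is too small for naive concentration to beat the union bound. Here I would use the local-limit description of determinantal hypertrees from \cite{meszaros2022local} to analyze the links of low-dimensional faces in $K_{n,d,k}$: each such link is a union of $k$ independent local-limit-like random pieces, and for $k$ sufficiently large these become dense enough to rule out near-cocycles of small support, in particular forcing $H^{d-1}(K_{n,d,k},\mathbb{F}_2)=0$ and yielding the requisite expansion for sparse $f$. This local-to-global small-support analysis is the principal technical difficulty, and is precisely where the assumption $k\ge k_d$ is used: each additional independent hypertree contributes enough new $d$-faces to break every potential sparse cocycle.
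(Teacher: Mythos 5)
Your decomposition is genuinely different from the paper's, and most of it is plausible, but there is one real gap and one false claim along the way. First, the false claim: for constant $k$, it is \emph{not} true that with high probability every $(d-1)$-face has degree within a constant factor of its mean. Those degrees are sums of $\Theta(n)$ negatively associated indicators with mean $\Theta(k)$, so the upper-tail probability from Bernstein is only $\exp(-\Theta(k))$, which cannot beat a union bound over $\binom{n}{d}$ faces when $k$ is constant. Luckily your argument only ever needs a \emph{lower} bound on these degrees (to lower-bound $\|\delta_{d-2}f\|_K$ and $\|\delta_{d-1}f\|_K$), and that does hold: each of the $k$ independent hypertrees is pure, and one can show the degree in the union is at least $k-O(1)$ with high probability, as the paper does via a union-bound argument rather than concentration. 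Your transfer argument for $i\le d-2$ then goes through, and your large-norm $i=d-1$ argument is essentially the paper's (there the union bound over $2^{\Theta(n^d)}$ cochains is beaten by the $\exp(-\Theta(kn^d))$ concentration once $k$ is large, so the ``chaining'' you invoke is not actually needed).

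The genuine gap is exactly the small-norm case of $i=d-1$, which you flag as ``the principal technical difficulty'' but for which you give only a gesture. Two problems with the sketch. (1) ``Forcing $H^{d-1}(K,\mathbb{F}_2)=0$'' is not the right target: the cochains you must rule out are not just cocycles with $\delta f=0$ but near-cocycles with $\delta f$ nonzero and very sparse, and killing cohomology does nothing for those. (2) ``Local-limit pieces become dense enough'' is not an argument; the density of links doesn't by itself preclude a sparse minimal cochain whose coboundary avoids $K(d)$. What actually closes this regime in the paper is the Evra--Kaufman local-to-global criterion (their Theorem 3.2), which you would need to invoke explicitly. Using it requires establishing two nontrivial facts about every link $\Lk(\sigma,K)$: that it is an $\alpha$-skeleton expander, and that it is itself a coboundary expander. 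The first is a separate probabilistic argument (the paper's Section 5). The second is where the real structural content lies: one must generalize to the family $T_{n,d,\ell}$ (determinantal hypertrees with an added percolation parameter), prove that links of $T_{n,d,\ell}^\cup$ are again of this form in one lower dimension (the paper's Lemma on links), and then run an induction on $d$. Without this generalization the induction cannot even be set up, because the link of a determinantal hypertree is not again a determinantal hypertree. Your proposal makes no mention of the $T_{n,d,\ell}$ family, the induction on dimension, the skeleton-expansion requirement, or the Evra--Kaufman theorem, so as written the small-norm regime is unproven.
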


The spectral expansion of $K_{n,d,k}$ was proved by Vander Werf \cite{werf2022determinantal}  for $k=\delta \log n$, where $\delta$ is an arbitrary positive constant. The one dimensional analogue of our result was proved by Goyal, Rademacher and Vempala~\cite{goyal2009expanders}. Our theorem provides an example of a coboundary expander of bounded average upper degree. Note that there are coboundary expanders with bounded maximum upper degree, in dimension~$2$ these were constructed Lubotzky and  Meshulam~\cite{lubotzky2015random}, and for all dimensions by Lubotzky, Luria and Rosenthal   \cite{lubotzky2019random}.

It is clear from the definitions that $\mathfrak{h}_{d-1}(K_{n,d,k})>0$ if and only if ${H^{d-1}(K_{n,d,k},\mathbb{F}_2)=\{0\}}$. Also note that $H^{d-1}(K_{n,d,k},\mathbb{F}_2)\cong H_{d-1}(K_{n,d,k},\mathbb{F}_2)$. Thus, Theorem~\ref{thm1} has the following corollary.

\begin{corr}\label{corH}
Let $k_d$ be the constant provided by Theorem~\ref{thm1}. For any constant $k\ge k_d$, we have
\[\lim_{n\to \infty} \mathbb{P}\left(H_{d-1}(K_{n,d,k},\mathbb{F}_2)=\{0\}\right)=1.\]
\end{corr}

It is a natural question what is the smallest possible constant $k_d$ in Theorem~\ref{thm1}. In two dimensions, we can see that  $k_2=1$ is not sufficient by combining Corollary~\ref{corH} and the result of the author \cite{meszaros20242} that 
\[\liminf_{n\to\infty} \mathbb{P}(H_{1}(T_{n,2},\mathbb{F}_2)\neq \{0\})>0.\]
We expect the same in higher dimensions. On the other hand, we believe that $k_d=2$ should already work. Our proof gives a bound on $k_d$ which is at least doubly exponential in $d$, see Remark~\ref{remarkdouble2}. %Note that there is a notion weaker than the notion of coboundary expansion called \emph{cosystolic expansion} \cite{lubotzky2018high}. We conjecture that $T_{n,d}$ should be a cosystolic expander with high probability for $d\ge 2$.

Note that we can sample determinantal hypertrees in polynomial time. In fact, this is true for any determinantal measure \cite{hough2006determinantal}. For $d=1$, the Aldous-Broder algorithm \cite{aldous1990random,broder1989generating} and Wilson's algorithm \cite{wilson1996generating} provide us an even more efficient way of sampling. It would be nice to generalize these algorithms for higher dimensions.

Kahle and Newman \cite{kahle2022topology} conjectured that the $p$-torsion of the first homology group of a $2$-dimensional determinantal hypertree asymptoticly Cohen-Lenstra distributed, see also~\cite{kahle2020cohen} for related conjectures. By the results of the author~\cite{meszaros20242}, we know that this conjecture is not true for $p=2$. The conjecture remains open for $p>2$. The survey of Wood \cite{wood2023probability} provides a good overview of the Cohen-Lenstra heuristics. For a simplified model motivated by determinantal hypertrees, the Cohen-Lenstra limiting distribution was established by the author \cite{meszaros2023cohen} for $p\ge 5$. The Cohen-Lenstra limiting distribution is usually proved by calculating the limiting moments and using the results of Wood~\cite{wood2017distribution}. In our context, this means that for any finite abelian $p$-group $G$, we need to calculate the limit of the expected size of the first cohomology group $H^1(T_{n,2},G)$, that is, the expected number of cosets $f+B^1(T_{n,2},G)$ such that $\delta_{1}f=0$. Previous papers \cite{wood2017distribution,wood2019random,meszaros2020distribution} in this topic suggest that we need to handle typical and non-typical cosets separately for some notion of typicality, such that the contribution  of non-typical cosets is negligible. Our hope is that our method can be refined to show that the contribution  of non-typical cosets is indeed negligible. Handling the typical cosets seems more challenging.

\subsection{Outline of the proof}

The key ingredient of the proof is the  local to global criterion of Evra and Kaufman~\cite{evra2016bounded}, see Section~\ref{secevrakaufman}. This criterion says that for a simplicial
complex, if all the links are coboundary expanders and the complex
and all its links are skeleton expanders, then small minimal cocycles expand. To apply this criterion, we need a good understanding of the links of determinantal hypertrees. Thus, in Section~\ref{secgendet}, we introduce generalized determinantal hypertrees and show that any link of a generalized determinantal hypertree is again a generalized determinantal hypertree as it was observed by Vander Werf~\cite{werf2022determinantal}. To be able to use induction, we will prove a slightly stronger version of Theorem~\ref{thm1}, where we take the union of generalized determinantal hypertrees, see Theorem~\ref{thmstronger}.

In Section~\ref{secskel}, we prove that with high probability  the union of $k$ generalized determinantal hypertrees is a skeleton expander provided that $k$ is large enough.

In Section~\ref{secfinal}, we use induction on $d$ to show that the union of $k$ generalized determinantal hypertrees is a coboundary expander for all large enough $k$. The Evra-Kaufman criterion gives the expansion of small minimal cocycles. The expansion of large minimal cocycles is obtained by combining concentration inequalities with the fact that complete complexes are expanders.

\medskip

\textbf{Acknowledgement:} The author is grateful to the anonymous referees for their useful comments.
The author was supported by the NSERC discovery grant of B\'alint Vir\'ag and the KKP 139502 project.

\section{Preliminaries on coboundary expansion}

\subsection{A few notations}
Note that any $f\in C^i(K,\mathbb{F}_2)$ is uniquely determined by its support. Given $A\subset K(i)$, let $f_A\in C^{i}(K,\mathbb{F}_2)$ be its characteristic vector. We use $A$ and $f_A$ interchangeably. Thus,
we use $\|A\|$ to denote $\|f_A\|$, and $\delta_{i,\mathbb{F}_2} A$ to denote $\supp \delta_{i,\mathbb{F}_2} f_A$.

For $\sigma\in K$, the link of $\sigma$ is defined as
\[\Lk(\sigma,K)=\{\tau\subset K(0)\setminus \sigma\,:\, \tau\cup\sigma\in K \}.\]

We use the notation ${{[n]}\choose{d}}$ for $\{\sigma\subset [n]\,:\,|\sigma|=d\}$.

\subsection{Minimal cochains}

An $f\in  C^i(K,\mathbb{F}_2)$ is called minimal if
\[\|f\|=\min_{g\in f+B^i(K,\mathbb{F}_2)} \|g\|.\]
An $A\subset C^i(K,\mathbb{F}_2)$ is called minimal if $f_A$ is minimal. If we want to emphasize the complex $K$, we say that $A$ is $K$-minimal.

Equation \eqref{hidef} can be rephrased as
\begin{equation}\label{hiseconddef}
    \mathfrak{h}_i(K)=\min_{\substack{\emptyset \neq A\in C^i(K,\mathbb{F}_2)\\A\text{ is minimal }}} \frac{\|\delta_i A\|}{\|A\|}.
\end{equation}

\subsection{The local to global criteria of Evra and Kaufman}\label{secevrakaufman}

Let $\alpha>0$. We say that the complex $K$ is an $\alpha$-skeleton expander, if for all $A\subset K(0)$, we have
\[\|E(A,A)\|\le 4(\|A\|^2+\alpha \|A\|),\]
where $E(A,A)\subset K(1)$ are the edges in $K$ with both vertices in $A$.

\begin{theorem}(Evra, Kaufman \cite[Theorem 3.2]{evra2016bounded})\label{evrakaufman}
For any $d\ge 1$ and $\beta>0$, there are positive constants $\alpha=\alpha(d,\beta)$, $\bar{\varepsilon}=\bar{\varepsilon}(d,\beta)$ and $\bar{\mu}=\bar{\mu}(d,\beta)$ with the following property. Assume that $K$ is a $d$-dimensional complex satisfying that
\begin{enumerate}[(a)]
    \item For all $\sigma \in K$ such that $1\le |\sigma|\le d-1$, we have
    \[\mathfrak{h}(\Lk(\sigma,K))\ge \beta.\]
    \item For all $\sigma \in K$ such that $0\le |\sigma|\le d-1$, the link $\Lk(\sigma,K)$ is an $\alpha$-skeleton expander.
\end{enumerate}
Then for any $0\le i\le d-1$ and a minimal $A\subset K(i)$ such that $\|A\|\le \bar{\mu}$, we have

\[\|\delta_{i,\mathbb{F}_2} A\|\ge \bar{\varepsilon}\|A\|. \]

\end{theorem}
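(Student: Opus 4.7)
The natural proof approach is a local-to-global argument: use the coboundary expansion of vertex links (condition (a)) to aggregate local expansion into a global bound, while using the skeleton expansion (condition (b)) to control the sparsity of ``bad'' vertices. I would run an induction on the cochain dimension $i$, handling $i=0$ separately via condition (b) applied to $K$ itself, and for $i \geq 1$ localizing the cochain to vertex links.

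For the base case $i = 0$, a minimal $A \subset K(0)$ with $\|A\| \leq \bar{\mu}$ satisfies $\|A\| \leq 1/2$, and $\delta_0 A$ is the cut $E(A, A^c)$. Applying the $\alpha$-skeleton expander hypothesis for $\Lk(\emptyset, K) = K$ to both $A$ and $A^c$, together with the identity $\|E(A, A)\| + \|E(A^c, A^c)\| + \|E(A, A^c)\| = 1$, a short calculation yields $\|\delta_0 A\| \geq \bar{\varepsilon}\|A\|$ as long as $\alpha$ and $\bar{\mu}$ are small enough.

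For $i \geq 1$, associate to each vertex $v$ the localization $A_v \in C^{i-1}(\Lk(v, K), \mathbb{F}_2)$ defined by $A_v(\sigma) = A(\sigma \cup \{v\})$. A direct computation gives the key identity
\[(\delta_i A)(\{v\} \cup \tau) = A(\tau) + (\delta_{i-1}^{\Lk(v, K)} A_v)(\tau),\]
so that $\|\delta_i A\|_K$ decomposes, up to weight normalizations, as a weighted average over $v$ of $\|\delta_{i-1} A_v\|_{\Lk(v, K)}$. By condition (a) applied at vertices, when $A_v$ is $\Lk(v,K)$-minimal one has $\|\delta_{i-1} A_v\| \geq \beta \|A_v\|_{\Lk(v,K)}$, which would immediately deliver the sought $\|\delta_i A\|_K \geq \bar{\varepsilon}\|A\|_K$ via the averaging.

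The main obstacle is the vertices $v$ where $A_v$ is far from being $\Lk(v, K)$-minimal. For each such bad $v$, choose a corrector $g_v \in B^{i-1}(\Lk(v, K), \mathbb{F}_2)$ with $\|A_v + g_v\|_{\Lk(v,K)} < \|A_v\|_{\Lk(v,K)}$. These local correctors must be assembled into a global $h \in C^{i-1}(K, \mathbb{F}_2)$, for instance by a threshold rule on each $(i-1)$-face of $K$ that aggregates the bounding cochains of the $g_v$'s. The global coboundary $\delta h \in B^i(K, \mathbb{F}_2)$ then matches the $g_v$'s on a dominant portion of the bad vertices, while $K$-minimality of $A$ forces $\|A + \delta h\|_K \geq \|A\|_K$. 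Combined with the $\alpha$-skeleton expander property at each vertex link to bound the pairwise overlaps of correctors, this constrains the weighted size of the bad-vertex set to be $o(\|A\|)$ provided $\|A\| \leq \bar{\mu}$ is sufficiently small. The good vertices then carry the required expansion, yielding $\|\delta_i A\| \geq \bar{\varepsilon}\|A\|$ with $\bar{\varepsilon}$ and $\bar{\mu}$ emerging as explicit functions of $d$, $\beta$ and $\alpha$.
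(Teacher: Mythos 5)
This statement is not proved in the paper: it is quoted verbatim as Evra and Kaufman's Theorem 3.2, and the only thing the author adds is a remark that their hypothesis of ``local minimality'' can be replaced by ``minimality,'' since every minimal cochain is locally minimal (their Lemma~2.18). So there is no proof in the paper to compare your sketch against; reproving this black-box theorem is not what the paper attempts or needs.

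That said, your sketch has two genuine gaps worth flagging. You derive the correct link identity $(\delta_i A)(\{v\}\cup\tau) = A(\tau) + (\delta_{i-1}^{\Lk(v,K)} A_v)(\tau)$, but then assert that $\|\delta_i A\|_K$ is, ``up to weight normalizations,'' a weighted average of the quantities $\|\delta_{i-1} A_v\|_{\Lk(v,K)}$. That would be true only if the $A(\tau)$ term vanished. Over $\mathbb{F}_2$ it is an honest bit flip: the restriction of $\supp \delta_i A$ to cofaces of $v$ is the \emph{symmetric difference} of $\{\{v\}\cup\tau : \tau \in \supp \delta_{i-1}^{\Lk(v,K)} A_v\}$ with $\{\{v\}\cup\tau : \tau \in \Lk(v,K)(i),\ A(\tau)=1\}$, not the former alone. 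Showing that the link coboundary contributions are not largely cancelled by these $A(\tau)$ flips is the technical heart of a local-to-global argument of this type; your sketch simply drops the term after writing it down.

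Second, the ``bad vertex'' machinery is both superfluous and underdeveloped. It is superfluous because, as the paper's remark makes explicit, a globally minimal $A$ is already locally minimal, so every localization $A_v$ is $\Lk(v,K)$-minimal and there are no vertices to correct. And even granting that one wanted to correct them, the claim that a ``threshold rule'' applied facewise to independently chosen local correctors $g_v \in B^{i-1}(\Lk(v,K),\mathbb{F}_2)$ produces a global $h$ whose coboundary ``matches the $g_v$'s on a dominant portion of the bad vertices'' is precisely the kind of compatibility statement that needs its own argument: the $g_v$'s live in separate links, and nothing forces their chosen primitives to agree on the overlaps of stars. As written this step is a placeholder, not a proof.
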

\begin{remark}
    In the original statement of \cite[Theorem 3.2]{evra2016bounded}, $A$ is required to be locally minimal. However, since every minimal cochain is locally minimal \cite[Lemma 2.18]{evra2016bounded}, the version given in Theorem~\ref{evrakaufman} is also true.
\end{remark}

\subsection{Expansion of the complete simplicial complex}

\begin{lemma}[Meshulam and Wallach \cite{meshulam2009homological}]\label{Deltaexpansion}
Consider the complete simplicial complex $\Delta_{[n]}$ on $n$ vertices. Let $A\subset {{[n]}\choose {i+1}}$ be minimal. Then
\[\left|\delta_{i,\mathbb{F}_2}^{\Delta_{[n]}} A\right|\ge \frac{|A|n}{i+2}.\]
\end{lemma}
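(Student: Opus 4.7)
My plan is to use a ``coning'' construction that reduces the coboundary expansion estimate to a direct application of the minimality of $A$ at each vertex. For each vertex $v_0 \in [n]$, I would build a specific $(i-1)$-cochain $g_{v_0}$ so that the modified cochain $f_A + \delta g_{v_0}$ has support corresponding exactly to the link of $v_0$ in $\delta A$.

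Concretely, define $g_{v_0} \in C^{i-1}(\Delta_{[n]}, \mathbb{F}_2)$ by $g_{v_0}(\rho) = 1$ iff $v_0 \notin \rho$ and $\rho \cup \{v_0\} \in A$. A quick case analysis (splitting according to whether $v_0 \in \sigma$) in the defining formula $(\delta g_{v_0})(\sigma) = \sum_{u \in \sigma} g_{v_0}(\sigma \setminus \{u\})$ shows that $(f_A + \delta g_{v_0})(\sigma)$ vanishes whenever $v_0 \in \sigma$, while for $v_0 \notin \sigma$ it equals $(\delta_{i,\mathbb{F}_2} f_A)(\sigma \cup \{v_0\})$, since the sum collects precisely the other $(i+1)$-subfaces of $\sigma \cup \{v_0\}$ in $A$. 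So $\supp(f_A + \delta g_{v_0})$ is in bijection with $\{\tau \in \delta_{i,\mathbb{F}_2} A : v_0 \in \tau\}$ via $\sigma \mapsto \sigma \cup \{v_0\}$.

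The minimality of $A$ applied to $g_{v_0}$ then gives
\[
|A| \;\le\; |\supp(f_A + \delta g_{v_0})| \;=\; |\{\tau \in \delta_{i,\mathbb{F}_2} A : v_0 \in \tau\}|.
\]
Summing this inequality over $v_0 \in [n]$ and using that each $\tau \in \delta_{i,\mathbb{F}_2} A$ is counted $|\tau| = i+2$ times on the right yields $n|A| \le (i+2)|\delta_{i,\mathbb{F}_2} A|$, which is the desired bound.

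The only subtlety I anticipate is the case $i = 0$, where $g_{v_0}$ lives in $C^{-1} \cong \mathbb{F}_2$ (cochains on the empty face); the same argument goes through after interpreting $\delta \colon C^{-1} \to C^0$ as the constant map, and it recovers the direct fact that for minimal $A \subset [n]$ with $|A| \le n/2$, each vertex lies in at least $|A|$ ``crossing'' edges of $\delta A$. Beyond this boundary-case bookkeeping, the argument is essentially a computational identity plus one invocation of minimality, so I do not expect a serious obstacle.
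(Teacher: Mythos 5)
Your argument is correct: the cochain $g_{v_0}$ you construct satisfies $(f_A+\delta g_{v_0})(\sigma)=0$ when $v_0\in\sigma$ and $(f_A+\delta g_{v_0})(\sigma)=(\delta_i f_A)(\sigma\cup\{v_0\})$ when $v_0\notin\sigma$, so minimality gives $|A|\le|\{\tau\in\delta_i A: v_0\in\tau\}|$ for every $v_0$, and summing over $v_0$ yields the bound after double-counting the $i+2$ vertices of each $\tau$. The paper itself states this lemma only as a citation to Meshulam--Wallach and gives no proof, so there is nothing internal to compare against; what you have written is the standard ``coning'' proof that underlies that reference (and Gromov's and Linial--Meshulam's treatments), and your handling of the $i=0$ case via $C^{-1}\cong\mathbb{F}_2$ and $B^0$ being the constants is consistent with the convention the paper implicitly uses (otherwise $A=[n]$ would be a counterexample).
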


\begin{remark}
    Assume that $K$ is a pure $d$-dimensional simplicial complex on $[n]$ with complete $d-1$-skeleton and maximum upper degree at most $D$. Then for all $0\le i\le d-1$ and $A\subset {{[n]}\choose {i+1}}$, we have
    \[\frac{|A|{{n-i-1}\choose{d-i-1}}}{(d-i){{d+1}\choose{i+1}}|K(d)|}\le \|A\|\le \frac{|A|{{n-i-1}\choose{d-i-1}}D}{(d-i){{d+1}\choose{i+1}}|K(d)|}.\]
    Combining this with Lemma~\ref{Deltaexpansion}, we see that for all $0\le i\le d-2$, we have
    \[\mathfrak{h}_i(K)\ge \frac{1}{D}.\]
    Thus, if we want to understand the coboundary expansion of complexes like above, we can restrict our attention to $\mathfrak{h}_{d-1}$. 

    Although $K_{n,d,k}$ is a pure $d$-dimension complex with complete $d-1$-skeleton, we do not have a uniform bound on maximum upper degree of $K_{n,d,k}$. Thus, in this case comparing $\|A\|$ and $|A|$ requires some additional effort, see Lemma~\ref{NormVSSize}. Therefore, bounding $\mathfrak{h}_i(K_{n,d,k})$ is a nontrivial problem even for $i<d-1$.
\end{remark}

\section{Preliminaries on determinantal processes}

\subsection{The definition of a discrete determinantal process}\label{secdetdef}

An $m\times m$ matrix  $Q$ over the reals is called a positive contraction, if $Q=Q^T$ and the spectrum of $Q$ is contained in $[0,1]$. Equivalently, $Q$ is a positive contraction if and only if it is a principal minor of some orthogonal projection matrix, as it was shown in \cite[Section 8]{lyons2003determinantal}. Assume that the rows and columns of $Q$ are indexed with the set $E$. Given a subset $A$ of $E$, let $Q_A$ be the submatrix of $Q$ determined by the rows and columns with index in $A$. There is a random subset $X$ of $E$ with the property that for all $A\subset E$, we have
\[\mathbb{P}(A\subset X)=\det Q_A.\]
The law of $X$ is uniquely determined by the property above. We call $X$ the determinantal process corresponding to $Q$. See \cite{lyons2003determinantal,hough2006determinantal} for more information on determinantal measures.

Given a positive integer $k$, let $X^{1},X^{2},\dots,X^k$ be $k$ independent copies of $X$, and let
\[X^{\Sigma }=\cup_{i=1}^k \{i\}\times X^i\subset [k]\times E,\]
and let
\[X^{\cup }=\cup_{i=1}^k  X^i\subset  E.\]

Note that $X^{\Sigma }$ is also a determinantal process corresponding to the matrix $Q^\Sigma$, which is the direct sum of $k$ copies of the matrix $Q$.

For $A\subset E$, we define $A^\Sigma=[k]\times A$.

Note that $X^\Sigma,X^\cup$ and $A^\Sigma$ depend on $k$, but we suppress this dependence in the notations.

\subsection{Negative associations}

Determinantal processes have negative associations \cite{lyons2003determinantal}. For us only a special case of this fact will be important, which easily follows from Hadamard's inequality: For any $A\subset E$, we have
\begin{equation}\label{basicHadamard}\mathbb{P}(A\subset X)=\det Q_A\le \prod_{a\in A} Q(a,a).
\end{equation}

As a corollary, we obtain the following lemma.

\begin{lemma}\label{unionHadamard}
Let $A\subset E$. Then
\[\mathbb{P}(A\subset X^{\cup})\le k^{|A|}\prod_{a\in A}  Q(a,a).\]
\end{lemma}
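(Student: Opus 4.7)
The plan is to decompose the event $\{A\subset X^\cup\}$ according to which of the $k$ independent copies "witnesses" each element of $A$, apply independence across copies, and then invoke the Hadamard-type bound \eqref{basicHadamard} within each copy.

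More precisely, I would begin by observing that if $A\subset X^\cup = \cup_{i=1}^k X^i$, then there exists (not necessarily unique) a function $\phi\colon A\to[k]$ such that $a\in X^{\phi(a)}$ for every $a\in A$. A union bound over all such functions gives
\[
\mathbb{P}(A\subset X^\cup)\;\le\;\sum_{\phi\colon A\to[k]}\mathbb{P}\bigl(\forall a\in A:\ a\in X^{\phi(a)}\bigr).
\]
For a fixed $\phi$, writing $A_i=\phi^{-1}(i)$, the event on the right is $\bigcap_{i=1}^k\{A_i\subset X^i\}$. Since the copies $X^1,\dots,X^k$ are independent and each has law of $X$, this probability factors as $\prod_{i=1}^k\mathbb{P}(A_i\subset X^i)=\prod_{i=1}^k\det Q_{A_i}$.

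Now I would apply \eqref{basicHadamard} separately to each factor, yielding $\det Q_{A_i}\le \prod_{a\in A_i}Q(a,a)$, and then telescope the products across the partition $A=\bigsqcup_i A_i$ to obtain $\prod_{i=1}^k\det Q_{A_i}\le\prod_{a\in A}Q(a,a)$. Since this upper bound does not depend on $\phi$, summing over the $k^{|A|}$ choices of $\phi$ immediately gives the claim.

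There is no real obstacle here — the lemma is essentially a union bound packaged together with Hadamard. The only thing to be careful about is that the $\phi$ witnessing $A\subset X^\cup$ is not unique (an element of $A$ may lie in several copies), which is precisely why the inequality is stated with a factor $k^{|A|}$ rather than being an identity; the argument handles this by accepting the (potentially very loose) overcounting coming from the union bound.
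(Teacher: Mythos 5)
Your proof is correct and follows essentially the same route as the paper: a union bound over witness functions $\phi\colon A\to[k]$ followed by the Hadamard bound \eqref{basicHadamard}. The only cosmetic difference is that you factor the probability via independence of the $X^i$ and apply Hadamard per copy, while the paper views the ensemble as the single determinantal process $X^\Sigma$ with block-diagonal kernel $Q^\Sigma$ and applies Hadamard once — but since the determinant of a block-diagonal principal minor factors over the blocks, this is the same computation.
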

\begin{proof}
Given a vector $f\in [k]^A$, let $A_f=\{(f(a),a)\,:\, a\in A\}$. By the union bound and \eqref{basicHadamard}, we have
\begin{align*}
\mathbb{P}(A\subset X^{\cup})&=\mathbb{P}(A_f\subset X^{\Sigma}\text{ for some }f\in [k]^A)\\&\le \sum_{f\in [k]^A} \mathbb{P}(A_f\subset X^{\Sigma})\\&\le \sum_{f\in [k]^A} \prod_{a\in A} Q^\Sigma((f(a),a),(f(a),a))\\&=k^{|A|} \prod_{a\in A}  Q(a,a).
\end{align*}
\end{proof}

\subsection{Concentration results for determinantal processes}

\begin{lemma}\cite[Theorem 7]{hough2006determinantal}\label{bernoillisum}
Let $A$ be a $k$-element subset of $E$, and let $\lambda_1,\lambda_2,\dots,\lambda_k$ be the eigenvalues of $Q_A$. Then $X\cap A$ has the same distribution as $\sum_{i=1}^k B_i$, where $B_1,B_2,\dots,B_k$ are independent Bernoulli random variables such that $\mathbb{P}(B_i=1)=\lambda_i$.
\end{lemma}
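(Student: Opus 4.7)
The plan is to verify the distributional identity by showing that $|X\cap A|$ and $\sum_{i=1}^k B_i$ share the same probability generating function. Since both random variables take values in $\{0,1,\dots,k\}$, matching PGFs is enough.

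First, I would reduce to the case $E=A$. The defining property of the determinantal process gives, for every $S\subseteq A$,
\[\mathbb{P}(S\subseteq X\cap A)=\mathbb{P}(S\subseteq X)=\det Q_S=\det (Q_A)_S,\]
so $X\cap A$ is itself a determinantal process on $A$ with kernel $Q_A$. Thus I may assume $E=A$ and $Q=Q_A$, a $k\times k$ positive contraction with eigenvalues $\lambda_1,\dots,\lambda_k$.

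Next I would compute the PGF of $|X|$ directly from the determinantal marginals. Expanding the product,
\[\mathbb{E}\bigl[z^{|X|}\bigr]=\mathbb{E}\Bigl[\prod_{i\in A}\bigl(1+(z-1)\mathbf{1}_{i\in X}\bigr)\Bigr]=\sum_{S\subseteq A}(z-1)^{|S|}\,\mathbb{P}(S\subseteq X)=\sum_{S\subseteq A}(z-1)^{|S|}\det Q_S.\]
The classical characteristic-polynomial expansion $\sum_{S\subseteq A} t^{|S|}\det Q_S=\det(I+tQ)$, applied at $t=z-1$, then yields
\[\mathbb{E}\bigl[z^{|X|}\bigr]=\det\bigl(I+(z-1)Q\bigr)=\prod_{i=1}^k\bigl(1+(z-1)\lambda_i\bigr),\]
using that $\det$ is invariant under the orthogonal diagonalization $Q=U\Lambda U^T$.

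Finally, independence of the $B_i$ gives
\[\mathbb{E}\Bigl[z^{\sum_i B_i}\Bigr]=\prod_{i=1}^k\bigl(\lambda_i z+(1-\lambda_i)\bigr)=\prod_{i=1}^k\bigl(1+(z-1)\lambda_i\bigr),\]
which agrees with the above, completing the proof. There is no real obstacle here; the only step that requires care is invoking the Cauchy--Binet-type identity $\sum_S t^{|S|}\det Q_S=\det(I+tQ)$, which is standard (it is the coefficient expansion of the characteristic polynomial), and the verification that marginals of a determinantal process on $E$ are determinantal on $A$ with kernel $Q_A$, which is immediate from the defining relation.
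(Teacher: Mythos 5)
The paper states this lemma purely as a citation to Hough--Krishnapur--Peres--Vir\'ag~\cite{hough2006determinantal} and gives no proof of its own, so there is no in-paper argument to compare against. Your proof is correct and is the standard generating-function argument: the reduction to $E=A$ via $\mathbb{P}(S\subseteq X\cap A)=\det (Q_A)_S$ is immediate from the definition; the expansion
\[
\mathbb{E}\bigl[z^{|X\cap A|}\bigr]=\sum_{S\subseteq A}(z-1)^{|S|}\det Q_S=\det\bigl(I+(z-1)Q_A\bigr)=\prod_{i=1}^k\bigl(1+(z-1)\lambda_i\bigr)
\]
uses the classical principal-minor expansion of $\det(I+tQ_A)$ and the orthogonal diagonalization of the symmetric matrix $Q_A$; and since both $|X\cap A|$ and $\sum_i B_i$ are supported on $\{0,\dots,k\}$, matching probability generating functions determines the distribution. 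This is in substance the same route taken in the cited source, so no further commentary is needed.
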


\begin{lemma}\label{berstein}
Let $0\le \varepsilon\le 1$, and let $A$ be a $k$-element subset of $E$. Then
\[\mathbb{P}\left(\Big||X\cap A|-\mathbb{E}|X\cap A|\Big|\ge \varepsilon \mathbb{E}|X\cap A|\right)\le 2\exp\left(-\frac{\varepsilon^2}{4}\mathbb{E}|X\cap A|\right).\]
\end{lemma}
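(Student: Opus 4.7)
The plan is to reduce the statement to a classical Chernoff--Bernstein inequality for sums of independent Bernoulli random variables, using the Hough--Krishnapur--Peres--Vir\'ag decomposition recorded in Lemma~\ref{bernoillisum}.

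First, I would apply Lemma~\ref{bernoillisum} directly: let $\lambda_1,\dots,\lambda_k$ denote the eigenvalues of $Q_A$, so that $|X\cap A|$ has the same distribution as $S:=\sum_{i=1}^k B_i$, where $B_1,\dots,B_k$ are independent Bernoulli random variables with $\mathbb{P}(B_i=1)=\lambda_i$. In particular, $\mu:=\mathbb{E}|X\cap A|=\sum_{i=1}^k \lambda_i=\mathbb{E}S$. Thus it suffices to prove the two one-sided bounds
\[\mathbb{P}\bigl(S\ge (1+\varepsilon)\mu\bigr)\le \exp\!\Bigl(-\tfrac{\varepsilon^2}{4}\mu\Bigr),\qquad \mathbb{P}\bigl(S\le (1-\varepsilon)\mu\bigr)\le \exp\!\Bigl(-\tfrac{\varepsilon^2}{4}\mu\Bigr),\]
and then take the union bound.

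The next step is a standard exponential moment computation. For any $t>0$, independence gives $\mathbb{E}e^{tS}=\prod_{i=1}^k \bigl(1+\lambda_i(e^t-1)\bigr)\le \exp\!\bigl((e^t-1)\mu\bigr)$, using $1+x\le e^x$. Markov's inequality then yields $\mathbb{P}(S\ge (1+\varepsilon)\mu)\le \exp\!\bigl((e^t-1)\mu - t(1+\varepsilon)\mu\bigr)$. Optimizing in $t$ (taking $t=\log(1+\varepsilon)$) produces the classical multiplicative Chernoff bound
\[\mathbb{P}\bigl(S\ge (1+\varepsilon)\mu\bigr)\le \Bigl(\tfrac{e^\varepsilon}{(1+\varepsilon)^{1+\varepsilon}}\Bigr)^{\!\mu}\le \exp\!\Bigl(-\tfrac{\varepsilon^2}{3}\mu\Bigr)\]
for $0\le\varepsilon\le 1$, and an analogous argument with $t<0$ gives $\mathbb{P}(S\le(1-\varepsilon)\mu)\le \exp(-\varepsilon^2\mu/2)$. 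Since $\varepsilon^2/3\ge \varepsilon^2/4$ and $\varepsilon^2/2\ge \varepsilon^2/4$, both bounds are dominated by $\exp(-\varepsilon^2\mu/4)$, and summing them yields the claim.

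There is no real obstacle here: the only mildly delicate point is verifying the elementary inequality $\frac{e^\varepsilon}{(1+\varepsilon)^{1+\varepsilon}}\le e^{-\varepsilon^2/3}$ on $[0,1]$, which is a routine Taylor-expansion calculation. The generous constant $1/4$ in the statement is precisely chosen so that a single clean bound covers both tails without bookkeeping, so one could even cite any standard reference on Chernoff bounds for Poisson-binomial variables directly after invoking Lemma~\ref{bernoillisum}.
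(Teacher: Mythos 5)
Your proof is correct and starts from the same key reduction (Lemma~\ref{bernoillisum}), but it then takes a genuinely different route: you derive the two one-sided multiplicative Chernoff tail bounds for a Poisson-binomial sum from the moment generating function, whereas the paper simply observes $|B_i - \mathbb{E}B_i|\le 1$ and $\sum_i\Var(B_i)=\sum_i\lambda_i(1-\lambda_i)\le\sum_i\lambda_i=\Tr Q_A=\mathbb{E}|X\cap A|$, and then invokes Bernstein's inequality; with these parameters and $\varepsilon\le 1$, Bernstein gives exponent at least $\tfrac{\varepsilon^2\mu/2}{1+\varepsilon/3}\ge\tfrac{3}{8}\varepsilon^2\mu\ge\tfrac14\varepsilon^2\mu$, where $\mu=\mathbb{E}|X\cap A|$. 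Your approach is self-contained and avoids citing a named concentration inequality, at the cost of verifying the elementary bound $\varepsilon-(1+\varepsilon)\log(1+\varepsilon)\le-\varepsilon^2/3$ on $[0,1]$ (which is correct, as is the lower-tail bound $\exp(-\varepsilon^2\mu/2)$); the paper's argument is shorter because it delegates exactly that calculus to the standard statement of Bernstein. Both routes land comfortably within the stated constant $\tfrac14$, so they are interchangeable here.
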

\begin{proof}
      Note that $|B_{i}-\mathbb{E}B_{i}|\le 1$, and
      \[ \sum_{i=1}^k \Var(B_{i})= \sum_{i=1}^k \lambda_i(1-\lambda_i)\le \sum_{i=1}^k \lambda_i= \Tr Q_A=\mathbb{E} |X\cap A|.\]
      Thus, the statement follows by combining Lemma~\ref{bernoillisum} with Bernstein's inequality.
\end{proof}

\section{Generalized determinantal hypertrees}\label{secgendet}

Let $P_{n,d}$ be the orthogonal projection from $C^d(\Delta_{[n]},\mathbb{R})$ to $B^d(\Delta_{[n]},\mathbb{R})$. Here $C^d(\Delta_{[n]},\mathbb{R})$ is endowed with the standard inner product
\[\langle f,g\rangle=\sum_{\sigma\in {{[n]}\choose{d+1}}} f(\sigma)g(\sigma).\]
Next we give a more explicit description of $P_{n,d}$ based on \cite{meszaros2022local}. For $d\ge 1$, let $I_{n,d}$ be a matrix indexed by ${{[n]}\choose {d}}\times {{[n]}\choose{d+1}}$ defined as follows. Let $\sigma=\{x_0,x_1,\dots,x_d\}\subset [n]$ such that $x_0<x_1<\dots<x_d$. For a $\tau\in {{[n]}\choose {d}}$, we set
\[I_{n,d}(\tau,\sigma)=\begin{cases}
(-1)^i&\text{if }\tau=\sigma\setminus\{x_i\},\\
0&\text{otherwise.}
\end{cases}
\]  
Note that $I_{n,d}^T$ is just the matrix of the coboundary map $\delta_{d-1,\mathbb{R}}^{\Delta_{[n]}}$.

For two sets $\sigma,\tau\in {{[n]}\choose {d+1}}$ such that $|\sigma\cap \tau|=d$, we introduce the notation \[J(\sigma,\tau)=I_{n,d}(\sigma\cap \tau,\sigma)\cdot I_{n,d}(\sigma\cap \tau,\tau).\]

\begin{lemma}\cite[Lemma 2.7]{meszaros2022local} We have
\[P_{n,d}=\frac{1}n I_{n,d}^T I_{n,d}.\]
Consequently,
\[P_{n,d}(\sigma,\tau)=\begin{cases}
\frac{d+1}{n}&\text{if $\sigma=\tau$,}\\
\frac{1}n J(\sigma,\tau)&\text{if $|\sigma\cap \tau|=d$,}\\
0&\text{otherwise.}
\end{cases}
\]
\end{lemma}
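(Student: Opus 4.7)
The entry formula for $P_{n,d}$ follows from the matrix identity $P_{n,d}=\tfrac1n I_{n,d}^T I_{n,d}$ by direct computation: for $\sigma,\tau\in{{[n]}\choose{d+1}}$, the $(\sigma,\tau)$-entry of $I_{n,d}^T I_{n,d}$ is $\sum_{\rho\in{{[n]}\choose{d}}} I_{n,d}(\rho,\sigma)I_{n,d}(\rho,\tau)$, which evaluates to $d+1$ when $\sigma=\tau$, to $J(\sigma,\tau)$ when $|\sigma\cap\tau|=d$, and to $0$ otherwise. So only the matrix identity itself requires proof.

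For the matrix identity, set $M:=I_{n,d}^T I_{n,d}$. Since $I_{n,d}^T$ is the matrix of $\delta_{d-1,\mathbb{R}}^{\Delta_{[n]}}$, the symmetric positive semidefinite matrix $M$ satisfies $\Image(M)=\Image(I_{n,d}^T)=B^d(\Delta_{[n]},\mathbb{R})$ and $\ker(M)=\ker(I_{n,d})=B^d(\Delta_{[n]},\mathbb{R})^\perp$. It therefore suffices to prove that every nonzero eigenvalue of $M$ equals $n$. Since $M$ and $L:=I_{n,d}I_{n,d}^T$ share the same nonzero spectrum, I would instead show that every nonzero eigenvalue of $L$, now acting on $C^{d-1}(\Delta_{[n]},\mathbb{R})$, equals $n$.

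My plan is to establish a Hodge-type identity for the complete simplex,
\[L+L'=n\cdot\mathrm{Id}_{C^{d-1}(\Delta_{[n]},\mathbb{R})},\qquad L':=I_{n,d-1}^T I_{n,d-1},\]
by direct entry-wise verification. The diagonal picks up $(n-d)+d=n$, where $n-d$ is the number of $d$-faces containing a given $(d-1)$-face; for $|\tau\cap\tau'|=d-1$, each of $L$ and $L'$ contributes a single $\pm 1$ term and the two cancel, which is morally the identity $\delta\circ\delta=0$ applied to the unique $(d-2)$-face and $d$-face determined by $\tau\cup\tau'$; for $|\tau\cap\tau'|\le d-2$ both contributions vanish outright. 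The main obstacle in the whole plan is this sign bookkeeping, which is routine but requires care.

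Granted the Hodge identity, the spectral claim drops out quickly using the acyclicity of $\Delta_{[n]}$: since $H^{d-1}(\Delta_{[n]},\mathbb{R})=0$ we have $\ker L=\ker\delta_{d-1,\mathbb{R}}^{\Delta_{[n]}}=B^{d-1}(\Delta_{[n]},\mathbb{R})$, while the standard $\ker(AA^*)=\ker(A^*)$ gives $\ker L'=B^{d-1}(\Delta_{[n]},\mathbb{R})^\perp$. The two kernels are orthogonal complements, so if $Lv=\lambda v$ with $\lambda\ne 0$, then decomposing $v=v_1+v_2$ along $B^{d-1}\oplus(B^{d-1})^\perp$ and using $Lv\in\Image(L)=(B^{d-1})^\perp$ forces $v_1=0$; hence $v\in\ker L'$, and $nv=Lv+L'v=\lambda v$ yields $\lambda=n$, completing the proof.
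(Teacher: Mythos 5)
The paper does not prove this lemma; it is cited verbatim from \cite[Lemma~2.7]{meszaros2022local}, so there is no internal proof to compare against. On its own merits, your argument is correct and complete. The entry computation from $M=I_{n,d}^T I_{n,d}$ is immediate: the diagonal counts the $d+1$ faces $\tau\subset\sigma$ of size $d$, each contributing $(\pm1)^2=1$, and the off-diagonal entry is $J(\sigma,\tau)$ when $|\sigma\cap\tau|=d$ (unique common $d$-set) and $0$ otherwise. The Hodge identity $I_{n,d}I_{n,d}^T+I_{n,d-1}^T I_{n,d-1}=n\,\mathrm{Id}$ on $C^{d-1}(\Delta_{[n]},\mathbb{R})$ is the standard fact that the full combinatorial Laplacian of the complete simplex is scalar; your diagonal count $(n-d)+d=n$ and the off-diagonal cancellation (which, as you note, is exactly $\partial_{d-1}\partial_d=0$ applied to the pair $\rho=\tau\cap\tau'\subset\sigma=\tau\cup\tau'$, forcing the two $\pm1$ terms to have opposite sign) are both right. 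The spectral conclusion is clean: $\ker L=\ker I_{n,d}^T=\ker\delta_{d-1}=B^{d-1}$ by acyclicity, $\ker L'=(\Image L')^\perp=(B^{d-1})^\perp$, an eigenvector of $L$ with $\lambda\neq 0$ lies in $\Image L=(B^{d-1})^\perp=\ker L'$, and then the identity gives $\lambda=n$. Transferring to $M$ via the shared nonzero spectrum and the image/kernel identification shows $\tfrac1n M$ is symmetric, has eigenvalues in $\{0,1\}$, and has image $B^d$, i.e., it is $P_{n,d}$. A marginally shorter variant would skip the eigenvalue argument and verify idempotence directly: $M^2=I_{n,d}^T(n\,\mathrm{Id}-I_{n,d-1}^TI_{n,d-1})I_{n,d}=nM$ since $I_{n,d-1}I_{n,d}=0$, so $\tfrac1nM$ is a symmetric idempotent with the correct image; but your route is equally valid.
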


The next lemma is a special case of \cite[Proposition 3.1]{lyons2009random}.
\begin{lemma}
The random hypertree $T_{n,d}$ is the determinantal process corresponding to $P_{n,d}$.
\end{lemma}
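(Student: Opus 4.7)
The plan is to show that the determinantal process $X$ with kernel $P_{n,d}$ has the distribution \eqref{measuredef}. Since $P_{n,d}$ is an orthogonal projection onto $B^d(\Delta_{[n]},\mathbb{R})$, its eigenvalues lie in $\{0,1\}$ and its rank is $\dim B^d(\Delta_{[n]},\mathbb{R})=\binom{n-1}{d}$. Applying Lemma~\ref{bernoillisum} to the full ground set $\binom{[n]}{d+1}$ thus forces $|X|=\binom{n-1}{d}$ almost surely, so for every $\binom{n-1}{d}$-subset $B$ of $\binom{[n]}{d+1}$,
\[\mathbb{P}(X=B)=\mathbb{P}(B\subset X)=\det (P_{n,d})_B.\]

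Let $M$ denote the submatrix of $I_{n,d}$ with columns restricted to $B$. Using $P_{n,d}=\tfrac{1}{n}I_{n,d}^T I_{n,d}$, one obtains $(P_{n,d})_B=\tfrac{1}{n}M^TM$, whence $\det (P_{n,d})_B=n^{-\binom{n-1}{d}}\det(M^TM)$. The columns of $M$ lie in the $\binom{n-1}{d}$-dimensional boundary space $\Image I_{n,d}=Z_{d-1}(\Delta_{[n]},\mathbb{R})$, so they are linearly independent exactly when $B$ is the set of $d$-faces of a hypertree $T_B$; otherwise the Gram matrix $M^TM$ is singular and $\det (P_{n,d})_B=0$, matching the support of \eqref{measuredef}.

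Suppose now $B$ is a hypertree. Because $T_B$ shares its complete $(d-1)$-skeleton with $\Delta_{[n]}$, the integral cycle module $\Lambda:=Z_{d-1}(\Delta_{[n]},\mathbb{Z})$ coincides with $Z_{d-1}(T_B,\mathbb{Z})$, and $M$ becomes an injection $\mathbb{Z}^B\hookrightarrow\Lambda$ whose cokernel is $H_{d-1}(T_B,\mathbb{Z})$ by definition of the latter. Writing $c_{n,d}^2$ for the squared covolume of $\Lambda$ inside $\Lambda\otimes\mathbb{R}$ equipped with the Euclidean metric inherited from $\mathbb{R}^{\binom{[n]}{d}}$ (equivalently, the Gram determinant of any $\mathbb{Z}$-basis of $\Lambda$), the standard identity between lattice index and Euclidean volume yields
\[\det(M^TM)=|H_{d-1}(T_B)|^2\cdot c_{n,d}^2,\]
where $c_{n,d}^2$ depends only on $n$ and $d$. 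Hence $\det (P_{n,d})_B=c_{n,d}^2\,n^{-\binom{n-1}{d}}|H_{d-1}(T_B)|^2$.

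The constant $c_{n,d}^2$ is pinned down by normalization: summing $\mathbb{P}(X=B)=1$ over all hypertrees and invoking Kalai's formula $\sum_T|H_{d-1}(T)|^2=n^{\binom{n-2}{d}}$ \cite{kalai1983enumeration} gives $c_{n,d}^2=n^{\binom{n-2}{d-1}}$, and therefore
\[\mathbb{P}(X=B)=\frac{|H_{d-1}(T_B)|^2}{n^{\binom{n-2}{d}}},\]
matching \eqref{measuredef}. The main conceptual step is the homological identification $\cok M\cong H_{d-1}(T_B)$ and the resulting Gram-determinant decomposition; everything else is linear algebra plus the normalization supplied by Kalai's enumeration.
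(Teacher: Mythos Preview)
Your proof is correct. The paper does not actually supply a proof of this lemma; it simply records that the statement is a special case of \cite[Proposition~3.1]{lyons2009random}. Your argument is the standard self-contained derivation: use that a projection-kernel determinantal process has fixed cardinality equal to the rank, identify $\cok M$ with $H_{d-1}(T_B)$ via the shared $(d-1)$-skeleton, apply the lattice covolume identity to write $\det(M^TM)=|H_{d-1}(T_B)|^2\,c_{n,d}^2$, and pin down the constant by Kalai's formula. This is essentially what underlies the cited result, so the approaches agree in spirit, with yours being explicit where the paper defers to the literature.
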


Note that here with a slight abuse, we identified $T_{n,d}$ with the set of its top dimensional faces $T_{n,d}(d)$.

Next we slightly generalize the notions above to be able to describe the distribution of the links of a determinantal hypertree.  For $0\le \ell<n$, let $P_{n,d,\ell}$ be the matrix where the rows and columns are indexed with ${{[n]}\choose{d+1}}$, and for $\sigma,\tau\in {{[n]}\choose{d+1}}$, we have

\[P_{n,d,\ell}(\sigma,\tau)=\begin{cases}
\frac{d+1+\ell}{n+\ell}&\text{if $\sigma=\tau$,}\\
\frac{1}{n+\ell} J(\sigma,\tau)&\text{if $|\sigma\cap \tau|=d$,}\\
0&\text{otherwise.}
\end{cases}
\]

Note that
\begin{equation}\label{eq:perc}
    P_{n,d,\ell}=\frac{n}{n+\ell} P_{n,d}+\frac{\ell}{n+\ell}I
\end{equation}
in particular, $P_{n,d,0}=P_{n,d}$.

The proof of the next lemma is also straightforward.
\begin{lemma}\label{lemmasubmatrix}
Consider the principal submatrix $Q$ of $P_{n,d}$ corresponding to the columns which are indexed with sets $\sigma$ such that $\{n-\ell+1,n-\ell+2,\dots,n\}\subset \sigma$. Then $Q=P_{n-\ell,d-\ell,\ell}$, after we identify the column of $P_{n-\ell,d-\ell}$ corresponding to $\tau$ with the column of $Q$ corresponding to $\tau\cup \{n-\ell+1,n-\ell+2,\dots,n\}$.

More generally, if we consider the principal submatrix $Q$ of $P_{n,d,\ell_1}$ corresponding to the columns which are indexed with sets $\sigma$ such that $\{n-\ell_2+1,n-\ell_2+2,\dots,n\}\subset \sigma$. Then \break $Q=P_{n-\ell_2,d-\ell_2,\ell_1+\ell_2}$, after the same identification as above.

\end{lemma}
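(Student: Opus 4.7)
The plan is to verify the claim by a direct entry-by-entry comparison, after noting that the first assertion is the $\ell_1 = 0$ special case of the second (since $P_{n,d,0} = P_{n,d}$), so I only need to handle the general statement. Set $S = \{n-\ell_2+1,\dots,n\}$ and parametrize the rows and columns of the principal submatrix $Q$ by pairs $(\tau, \tau\cup S)$ for $\tau \in \binom{[n-\ell_2]}{d+1-\ell_2}$. I then compare $Q(\tau_1 \cup S,\tau_2 \cup S)$ with $P_{n-\ell_2,d-\ell_2,\ell_1+\ell_2}(\tau_1,\tau_2)$ in three cases according to $|\sigma_1 \cap \sigma_2|$ where $\sigma_j = \tau_j \cup S$.

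The diagonal case $\tau_1 = \tau_2$ is a one-line algebraic identity: the common value $\tfrac{d+1+\ell_1}{n+\ell_1}$ equals $\tfrac{(d-\ell_2)+1+(\ell_1+\ell_2)}{(n-\ell_2)+(\ell_1+\ell_2)}$. The vanishing case is equally immediate: $|\tau_1 \cap \tau_2| < d-\ell_2$ if and only if $|\sigma_1 \cap \sigma_2| < d$, so both entries are zero. This leaves the one genuinely interesting case, namely $|\sigma_1 \cap \sigma_2| = d$, which translates to $|\tau_1 \cap \tau_2| = d - \ell_2$, i.e.\ $\tau_1$ and $\tau_2$ differ in exactly one element, say $x \in \tau_1 \setminus \tau_2$ and $y \in \tau_2 \setminus \tau_1$. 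Here I must show
\[
J(\sigma_1,\sigma_2) = J_{n-\ell_2,d-\ell_2}(\tau_1,\tau_2),
\]
which is the sole sign computation in the proof.

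The main (and only) obstacle is this sign computation, but it is resolved by a single observation about the definition of $I_{n,d}$: the nonzero entry $I_{n,d}(\sigma_1 \cap \sigma_2,\sigma_1) = (-1)^i$ is determined by the position $i$ of the removed element $x$ inside $\sigma_1$ written in increasing order. Because $S$ consists of the $\ell_2$ largest elements of $[n]$ and $\tau_1 \subset [n-\ell_2]$, every element of $S$ is larger than every element of $\tau_1$, so the rank of $x$ in $\tau_1 \cup S$ coincides with its rank in $\tau_1$. Consequently $I_{n,d}(\sigma_1 \cap \sigma_2, \sigma_1) = I_{n-\ell_2,d-\ell_2}(\tau_1 \cap \tau_2, \tau_1)$, and likewise for $\sigma_2$ and $\tau_2$. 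Multiplying the two identities gives the required equality of $J$-values, and combined with the common denominator $n + \ell_1 = (n - \ell_2) + (\ell_1 + \ell_2)$ this completes the verification.
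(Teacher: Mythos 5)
The paper declares this lemma ``straightforward'' and omits the proof, so there is no argument in the paper to compare against; your proof supplies exactly the verification the author left to the reader. The reduction to the $\ell_1=0$ case, the three-case entry comparison, and, crucially, the observation that $S=\{n-\ell_2+1,\dots,n\}$ sits entirely above $[n-\ell_2]$ in the linear order — so removing $x\in\tau_1\setminus\tau_2$ occupies the same index in the sorted list for $\sigma_1=\tau_1\cup S$ as for $\tau_1$, hence $I_{n,d}(\sigma_1\cap\sigma_2,\sigma_1)=I_{n-\ell_2,d-\ell_2}(\tau_1\cap\tau_2,\tau_1)$ — are all correct, and the common denominator identity $n+\ell_1=(n-\ell_2)+(\ell_1+\ell_2)$ closes the computation. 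This is the natural direct argument and it is complete.
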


Let $T_{n,d,\ell}$ be the determinantal process corresponding to $P_{n,d,\ell}$. We call $T_{n,d,\ell}$ a generalized determinantal hypertree. Note that by Lemma~\ref{lemmasubmatrix}, the matrix $P_{n,d,\ell}$ is a principal submatrix of an orthogonal projection matrix, so $P_{n,d,\ell}$ is a positive contraction. Thus, the definition of $T_{n,d,\ell}$ makes sense.

Using \eqref{eq:perc}, one can see that we can also obtain $T_{n,d,\ell}$ from $T_{n,d}$ by independently adding each missing $d$-dimensional face with probability $\frac{\ell}{n+\ell}$, see also \cite{werf2022determinantal}.

Using Lemma~\ref{lemmasubmatrix} and symmetry, we obtain the following lemma, which was also observed in \cite{werf2022determinantal}.

\begin{lemma}\label{Lemmalinks}
For $0\le \ell_2\le d-1$, let $\sigma\in {{[n]}\choose{\ell_2}}$. Then after relabeling the vertices, the link $\Lk(\sigma,T_{n,d,\ell_1})$ has the same distribution as $T_{n-\ell_2,d-\ell_2,\ell_1+\ell_2}$.
\end{lemma}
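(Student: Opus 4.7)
The plan is to combine three ingredients: the vertex-relabeling symmetry of the underlying measure, the elementary restriction principle for determinantal processes, and Lemma~\ref{lemmasubmatrix}.

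First, I would observe that the law of $T_{n,d,\ell_1}$ is invariant under any permutation of $[n]$, since the kernel $P_{n,d,\ell_1}$ transforms equivariantly under such permutations (the diagonal entries depend only on whether $\sigma=\tau$, and $J(\sigma,\tau)$ is invariant up to signs that cancel in the determinantal formula because $\det Q_A$ is unchanged when we conjugate $Q_A$ by a signed permutation). Hence, to prove the statement about the link of an arbitrary $\ell_2$-element set $\sigma$, it suffices to handle the single choice $\sigma=\{n-\ell_2+1,n-\ell_2+2,\dots,n\}$, for which $[n]\setminus\sigma=[n-\ell_2]$ requires no further relabeling.

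Next, since $T_{n,d,\ell_1}\supset T_{n,d}$ and $T_{n,d}$ has complete $(d-1)$-skeleton, the same is true of $T_{n,d,\ell_1}$, so the link $\Lk(\sigma,T_{n,d,\ell_1})$ is a $(d-\ell_2)$-dimensional complex with complete $(d-\ell_2-1)$-skeleton. Its top-dimensional faces are exactly the sets $\tau\subset[n-\ell_2]$ with $\tau\cup\sigma\in T_{n,d,\ell_1}(d)$. Thus the law of $\Lk(\sigma,T_{n,d,\ell_1})$ is determined by the law of the random subset
\[R=T_{n,d,\ell_1}\cap\Big\{\rho\in\binom{[n]}{d+1}\,:\,\sigma\subset\rho\Big\}\]
of $\{\rho\in\binom{[n]}{d+1}:\sigma\subset\rho\}$. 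Here I invoke the basic restriction property of determinantal processes: if $X$ is the determinantal process on $E$ with kernel $Q$ and $F\subset E$, then $X\cap F$ is the determinantal process on $F$ with kernel equal to the principal submatrix $Q_F$. This is immediate from the defining identity, since $\mathbb{P}(A\subset X\cap F)=\mathbb{P}(A\subset X)=\det Q_A$ for every $A\subset F$. Applying this with $E=\binom{[n]}{d+1}$, $Q=P_{n,d,\ell_1}$ and $F=\{\rho:\sigma\subset\rho\}$, I conclude that $R$ is the determinantal process corresponding to the principal submatrix of $P_{n,d,\ell_1}$ indexed by those $\rho$ containing $\sigma$.

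Finally, Lemma~\ref{lemmasubmatrix} identifies this principal submatrix with $P_{n-\ell_2,d-\ell_2,\ell_1+\ell_2}$, under the bijection $\tau\cup\sigma\leftrightarrow\tau$ between $F$ and $\binom{[n-\ell_2]}{d-\ell_2+1}$. But this is precisely the kernel defining $T_{n-\ell_2,d-\ell_2,\ell_1+\ell_2}$, so $R$ (which encodes the top faces of the link) has the same distribution as the top faces of $T_{n-\ell_2,d-\ell_2,\ell_1+\ell_2}$, and the complexes themselves therefore agree in distribution. There is no substantive obstacle in this argument: every step is either symmetry, the one-line restriction principle, or a direct appeal to Lemma~\ref{lemmasubmatrix}; the only care needed is to verify that the two identifications of index sets (from restriction and from Lemma~\ref{lemmasubmatrix}) are compatible, which they are.
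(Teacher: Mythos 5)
Your proof is correct and matches the paper's approach exactly: the paper's proof of this lemma is the one-line remark ``Using Lemma~\ref{lemmasubmatrix} and symmetry,'' and your argument fills in precisely those two ingredients (the permutation invariance of the kernel up to signs that cancel in determinants, and the restriction principle for determinantal processes followed by the identification of the principal submatrix from Lemma~\ref{lemmasubmatrix}). The care you take in noting that both the link and $T_{n-\ell_2,d-\ell_2,\ell_1+\ell_2}$ are determined by their top-dimensional faces, since both have complete lower skeleta, is the right way to pass from a statement about the determinantal point process to a statement about the complexes.
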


\begin{lemma}\label{pure}
The complex $T_{n,d,\ell}$ is always a pure $d$-dimensional complex. In other words, for $\sigma\in {{[n]}\choose{d}}$, there is always a $d$-dimensional face of $T_{n,d,\ell}$ containing $\sigma$.
\end{lemma}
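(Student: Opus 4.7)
The plan is to fix a single $\sigma\in\binom{[n]}{d}$ and show that the number of top-dimensional faces of $T_{n,d,\ell}$ containing $\sigma$ is almost surely at least one; a union bound over the finitely many $\sigma$ then finishes the proof.

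First I would relabel the vertices so that $\sigma=\{n-d+1,\dots,n\}$. This is legitimate because the determinantal process $T_{n,d,\ell}$ is invariant under permutations of $[n]$: such permutations act on $P_{n,d,\ell}$ by a signed conjugation, which leaves every principal-minor determinant unchanged. Writing $A_\sigma:=\{\tau\in\binom{[n]}{d+1}:\sigma\subset\tau\}$, a set of size $n-d$, Lemma~\ref{lemmasubmatrix} applied with $\ell_1=\ell$ and $\ell_2=d$ identifies the principal submatrix $Q$ of $P_{n,d,\ell}$ indexed by $A_\sigma$ with $P_{n-d,\,0,\,d+\ell}$.

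Next I would unfold the definition of $P_{n,d,\ell}$ in the degenerate case $d=0$. Using the fact that $J(\{v\},\{w\})=+1$ for any pair of distinct singletons, this yields
\[P_{n-d,\,0,\,d+\ell}=\tfrac{d+\ell}{n+\ell}\,I+\tfrac{1}{n+\ell}\,\mathbf{1}\mathbf{1}^{T},\]
whose spectrum is $\{1,\tfrac{d+\ell}{n+\ell},\dots,\tfrac{d+\ell}{n+\ell}\}$, with $1$ appearing exactly once. Lemma~\ref{bernoillisum} then tells us that $|T_{n,d,\ell}\cap A_\sigma|$ has the law of a sum of $n-d$ independent Bernoullis with those eigenvalues as parameters, and the Bernoulli with parameter $1$ is deterministically equal to $1$. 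Hence $|T_{n,d,\ell}\cap A_\sigma|\ge 1$ almost surely, and a union bound over $\binom{n}{d}$ choices of $\sigma$ completes the proof.

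I do not anticipate any real obstacle. The only moderately subtle point is verifying that the signs hidden in $J(\cdot,\cdot)$ really give $+1$ throughout the relevant submatrix (or at worst can be conjugated away by a signed diagonal matrix) so that a genuine eigenvalue equal to $1$ appears; everything else is a routine eigenvalue computation plus the Bernoulli-sum lemma.
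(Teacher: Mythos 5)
Your proof is correct, but it reaches the key fact — that the principal submatrix of $P_{n,d,\ell}$ indexed by $A_\sigma=\{\tau\in\binom{[n]}{d+1}:\sigma\subset\tau\}$ has an eigenvalue equal to $1$ — by a genuinely different route than the paper. The paper first reduces to $\ell=0$ (via Lemma~\ref{Lemmalinks} together with the observation that links of a pure complex are pure), and then notes that $\delta_{d-1,\mathbb{R}}f_\sigma$ lies in $B^d(\Delta_{[n]},\mathbb{R})$ and is supported exactly on $A_\sigma$, so it is an eigenvector of $P_{n,d}$ with eigenvalue $1$ that survives restriction to $A_\sigma$; this is the ``coboundary'' interpretation of the projection. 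You instead invoke Lemma~\ref{lemmasubmatrix} directly with $\ell_1=\ell$, $\ell_2=d$ to recognize the submatrix as $P_{n-d,0,\ell+d}=\tfrac{d+\ell}{n+\ell}I+\tfrac{1}{n+\ell}\mathbf{1}\mathbf{1}^T$ (after checking $J\equiv 1$ in dimension $0$), whose spectrum is computed by hand. Your route is a bit more explicit, handles arbitrary $\ell$ in one step without the link reduction, and makes the eigenvalue computation transparent; the paper's route is slightly shorter because the coboundary $\delta_{d-1,\mathbb{R}}f_\sigma$ hands you the eigenvector for free without any case analysis of $J$. Both then finish identically with Lemma~\ref{bernoillisum}.
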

\begin{proof}
Since the links of a pure simplicial complex are also pure, using Lemma~\ref{Lemmalinks}, it is enough to prove the statement for $\ell=0$. Let $\sigma\in {{[n]}\choose{d}}$, let $f_\sigma\in C^{d-1}(\Delta_{[n]},\mathbb{R})$ be the characteristic vector of $\sigma$. Then $\delta_{d-1,\mathbb{R}} f_\sigma$ is supported on
\[A=\{\sigma\cup \{i\}\,:\,u\in [n]\setminus \sigma\}.\]
Since $\delta_{d-1,\mathbb{R}} f_\sigma\in B^{d}(\Delta_{[n]},\mathbb{R})$, it is an eigenvector of $P_{n,d}$ with eigenvalue $1$. Then restricting $\delta_{d-1,\mathbb{R}} f_\sigma$ to $A$, we see that $1$ is an eigenvalue of the submatrix $(P_{n,d})_A$. Thus, by Lemma~\ref{bernoillisum}, we have $|T_{n,d}\cap A|\ge 1$.
\end{proof}

Finally,  for $k \ge 1$, similarly as in Section~\ref{secdetdef}, we define
\[T_{n,d,\ell}^\Sigma=\{ \{i\}\times \sigma\,:\,1\le i\le k, \sigma\in T_{n,d,\ell}^i\}\qquad\text{ and }\qquad T_{n,d,\ell}^\cup=\cup_{i=1}^k T_{n,d,\ell}^i,\]
where $T_{n,d,\ell}^1,\dots,T_{n,d,\ell}^k$ are i.i.d. copies of $T_{n,d,\ell}$.
\section{Skeleton expansion}\label{secskel}

Our goal in this section is to prove the following result:
\begin{lemma}\label{lemmaskeleton}
For any fixed $\ell\ge 0$, $d\ge 1$, $s>0$ and $\alpha>0$, there is a $k_0$ such that for all $k>k_0$ the following holds: $T_{n,d,\ell}^\cup$ is an $\alpha$-skeleton expander with probability $O(n^{-s})$.
\end{lemma}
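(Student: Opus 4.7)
The ground fact to exploit, recorded in Section~\ref{secdetdef}, is that the lifted process $T_{n,d,\ell}^{\Sigma}\subset [k]\times\binom{[n]}{d+1}$ is determinantal, while $T_{n,d,\ell}^{\cup}$ is only its projection onto the second coordinate. Consequently $|T_{n,d,\ell}^{\cup}\cap S|\le |T_{n,d,\ell}^{\Sigma}\cap([k]\times S)|$ for every $S\subset\binom{[n]}{d+1}$, and Bernstein's inequality (Lemma~\ref{berstein}) applies to the right-hand side with the determinantal matrix $Q^\Sigma$. All upper bounds on counts in $T_{n,d,\ell}^{\cup}$ will flow from this; matching lower bounds come from a single copy $T_{n,d,\ell}^{1}$, which is already determinantal.

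Set $N=|T_{n,d,\ell}^{\cup}(d)|$, let $N_v$, $N_{uv}$ denote the number of top faces through $v$ and through $\{u,v\}$, and write $Z_A=\sum_{v\in A}N_v$ and $Y_A=\sum_{\{u,v\}\subset A}N_{uv}$. The $1$-skeleton of $T_{n,d,\ell}^{\cup}$ is complete by Lemma~\ref{pure}, so the weight definitions collapse to $\|A\|=Z_A/((d+1)N)$ and $\|E(A,A)\|=Y_A/(\binom{d+1}{2}N)$, and the $\alpha$-skeleton expander inequality becomes
\[Y_A\le \frac{2d}{d+1}\cdot\frac{Z_A^2}{N}+2d\alpha Z_A\qquad\text{for all }A\subset[n].\]
Writing $p=(d+1+\ell)/(n+\ell)$ and $\bar N=(1-(1-p)^k)\binom{n}{d+1}$, symmetry gives $\mathbb{E}Z_A=(d+1)a\bar N/n$ and $\mathbb{E}Y_A=d(d+1)\binom{a}{2}\bar N/(n(n-1))$, so the inequality holds in expectation with a factor of $4$ to spare, independently of $\alpha$. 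The proof is about upgrading this to a high-probability statement uniform in $A$.

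The first step is to establish, with probability $1-O(n^{-s})$ and for a small absolute constant $\varepsilon$, the three global estimates: (i) $N=(1\pm\varepsilon)\bar N$; (ii) $N_v=(1\pm\varepsilon)(d+1)\bar N/n$ for every $v$; (iii) a uniform upper bound $N_{uv}\le M_d$ for every pair. Items (i) and (ii) follow from Bernstein applied to $T^{\Sigma}$ on the full ground set and on $[k]\times\{\tau\,:\, v\in\tau\}$, combined with a Markov bound on duplicates (controlling $|T^{\Sigma}|-N$) via Lemma~\ref{unionHadamard}, and a union bound over $v$. For (iii), when $d\ge 3$ Bernstein already gives $N_{uv}=O(kn^{d-2})$ with enough slack for a union bound over $\binom{n}{2}$ pairs; when $d=2$, Bernstein is too weak and I use the Hadamard tail $\mathbb{P}(N_{uv}\ge m)\le \binom{n-2}{m}(kp)^m\le (Ck/m)^m$ (following from Lemma~\ref{unionHadamard}) to obtain $M_2=O(\log n/\log\log n)$; when $d=1$, $M_1=1$ trivially.

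Given (i)--(iii), I verify the target inequality for every $A$ by splitting on $a=|A|$. For $a\le \alpha n/(c_d M_d)$ with a suitable constant $c_d$, the deterministic estimate $Y_A\le \binom{a}{2}M_d$ combined with the lower bound $Z_A\ge (1-\varepsilon)(d+1)a\bar N/n$ from (ii) gives $Y_A\le 2d\alpha Z_A$, so the second term on the right alone absorbs $Y_A$. For $a$ above this threshold, I write $Y_A=\sum_{j=2}^{d+1}\binom{j}{2}|T^{\cup}\cap S_j^{A}|$ with $S_j^{A}=\{\tau\,:\,|\tau\cap A|=j\}$ and apply Bernstein to each $[k]\times S_j^{A}$ to get $|T^{\cup}\cap S_j^{A}|\le (1+\varepsilon)kp|S_j^{A}|$ with failure probability $\exp(-\Omega(ka^2n^{d-2}))$ (dominant contribution at $j=2$); summing yields $Y_A\le (1+\varepsilon)\mathbb{E}Y_A$, which together with (i) and (ii) fits inside the factor-$4$ slack of the leading $\tfrac{2d}{d+1}Z_A^2/N$ term once $\varepsilon$ is small enough, and the failure probability is summable against $\binom{n}{a}\le(en/a)^a$ in this regime. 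The main obstacle is calibrating the trade-off between the union bound cost $\sim e^{a\log n}$ and the Bernstein tail $\sim\exp(-cka^2n^{d-2})$; these are comparable around $a\sim \log n/(kn^{d-2})$, and in the low-dimensional cases $d\in\{1,2\}$ an entire intermediate range of $a$ has to be handled by the deterministic bound via $M_d$ rather than by Bernstein. Taking $k\ge k_0(\alpha,\ell,d,s)$ sufficiently large inflates the Bernstein expectations (tightening the tails), enlarges the range of $a$ covered by the $M_d$-argument through the improved uniform bound on $N_{uv}$, and (most acutely in $d=1$) ensures $\bar N/n\to\infty$ so that all the concentration statements are meaningful.
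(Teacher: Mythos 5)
The case analysis has a genuine gap in dimension $d=1$. Your deterministic regime uses $Y_A\le\binom{a}{2}M_d$ and compares it against $2d\alpha Z_A$, where $Z_A\ge(1-\varepsilon)(d+1)a\bar N/n$. Working this out, the threshold up to which the deterministic bound holds is $a\lesssim \dfrac{k\alpha\,(d+1+\ell)\,n^{d-1}}{d!\,M_d}$, not $\alpha n/(c_dM_d)$ as you wrote: you have dropped a factor of $kn^{d-2}$. For $d\ge 2$ this is harmless, but for $d=1$ (where $M_1=1$ and $\bar N/n\to k(2+\ell)/2$, a \emph{constant}, not $\infty$ -- that last claim in your proposal is incorrect) the threshold is only $a=O(k)$. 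On the Bernstein side, for $d=1$ you need $ka^2/n\gtrsim a\log(n/a)$, i.e.\ $a\gtrsim n\log(n/a)/k$, which for any constant $k$ forces $a=\Theta(n)$. So the entire range $\Theta(k)<a<c(k)n$ is covered by neither part of your argument, and the failure probability of a union-bounded Bernstein estimate in that range (e.g.\ $a\approx\sqrt n$ gives $\exp(-\Theta(k))$ against $\binom{n}{\sqrt n}$) is not $O(n^{-s})$. Taking $k$ larger shrinks the gap at both ends by constant factors but never closes it.

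The paper fills exactly this hole with a tool your proposal does not have: Lemmas~\ref{Lemma15}--\ref{Lemma16} give, via a crude Hadamard/union-bound tail rather than Bernstein concentration, the uniform upper bound $|E^d_{\ge 2}(A)\cap K_n^\cup|<k\alpha_0|A|n^{d-1}$ for \emph{all} $|A|<\mu n$ simultaneously. This bound is much weaker than $(1+\varepsilon)\mathbb{E}|E^d_{\ge 2}(A)\cap K_n^\cup|$ (it is larger by a factor $\sim n/|A|$), but it is exactly what is needed to dominate $Y_A$ by the linear term $2d\alpha Z_A$, and its failure probability $\bigl(\tfrac{e(d+1+\ell)m}{n\alpha_0}\bigr)^{k\alpha_0 m n^{d-1}}$ beats $\binom{n}{m}$ for all $1\le m<\mu n$ (including $d=1$) once $k\alpha_0>2s+2$. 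The key point your plan misses is that for small sets one should not aim for concentration of $|E(A,A)|$ around its mean at all; one should prove a deliberately lossy upper bound whose tail still wins the union bound. Your instinct to use the pair-degree cap $M_d$ is a reasonable surrogate for $d\ge 2$, but for $d=1$ it is simply too weak, and the Hadamard argument of the paper is the substitute you are missing.

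Apart from this, the global estimates (i)--(iii), the reduction to $Y_A\le\frac{2d}{d+1}Z_A^2/N+2d\alpha Z_A$, and the Bernstein treatment of linear-size $A$ (paralleling Lemmas~\ref{SigmaVSCup}, \ref{Lemma17}, \ref{skeletonpart2}) are in the spirit of the paper's proof and look sound for $d\ge 2$.
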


Throughout this section we think of $\ell,d,s$ and $\alpha$ as fixed constants, and the hidden constant in the $O(\cdot)$ notation is allowed to depend on $\ell,d,s,\alpha$ and $k$.

In this section, we use the notations $K_n^\cup=T_{n,d,\ell}^\cup$ and $K_n^{\Sigma}=T_{n,d,\ell}^{\Sigma}$.

Let $\alpha_0=\frac{\alpha}{(d+1)!}$.

Clearly, we may assume that $\alpha_0<1$.
\begin{lemma}\label{SigmaVSCup}
For any constant $k$, with probability at least $1-O(n^{-s})$ the following event occurs: For all $\sigma\in {{[n]}\choose{d}}$, we have
\[|K_n^\cup\cap C_\sigma|> |K_n^\Sigma\cap C_\sigma^\Sigma|-s-d,\]
where
\[C_\sigma=\{\sigma\cup \{u\}\,:\, u\in [n]\setminus \sigma\},\]
and as before $C_\sigma^\Sigma=[k]\times C_\sigma.$
\end{lemma}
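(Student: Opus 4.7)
Fix $\sigma\in\binom{[n]}{d}$ and, for each $\tau\in C_\sigma$, write $X_\tau := |\{i\in [k] : \tau\in T_{n,d,\ell}^i\}|$. The quantity to control is the \emph{multiplicity excess}
\[
E_\sigma \;:=\; |K_n^\Sigma\cap C_\sigma^\Sigma| - |K_n^\cup\cap C_\sigma| \;=\; \sum_{\tau\in C_\sigma} (X_\tau-1)_+,
\]
and the statement is exactly $E_\sigma \le s+d-1$ for every $\sigma$. My plan is to prove $\mathbb{P}(E_\sigma \ge s+d) = O(n^{-s-d})$ for each individual $\sigma$ and union-bound over the $\binom{n}{d}=O(n^d)$ choices.

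The probabilistic input is a per-incidence Hadamard bound: for any finite collection $F\subset [k]\times C_\sigma$,
\[
\mathbb{P}\bigl(\tau\in T_{n,d,\ell}^i \text{ for every }(i,\tau)\in F\bigr)\;\le\; p^{|F|},\qquad p:=\tfrac{d+1+\ell}{n+\ell}=O(1/n),
\]
which is just \eqref{basicHadamard} applied to the product determinantal process $X^\Sigma$ attached to $Q=P_{n,d,\ell}$ (Hadamard inside each copy, independence across copies). If $E_\sigma\ge m$ then one can extract distinct $\tau_1,\dots,\tau_R\in C_\sigma$ and integers $q_1,\dots,q_R\ge 1$ with $\sum_r q_r=m$ and $X_{\tau_r}\ge q_r+1$ for every $r$. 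Such a configuration is witnessed by choosing, for each $\tau_r$, a subset of $q_r+1$ distinct copies containing it, i.e.\ a set $F$ of exactly $\sum_r(q_r+1) = m+R$ incidences. Summing the Hadamard bound over all witnesses (a sum over $R\le m$, compositions of $m$, choices of the $\tau_r$, and copy-subsets) gives terms of the form $(n-d)^R\cdot O(1)\cdot p^{m+R} = O(n^{-m})$; the combinatorial $n^R$ cancels the $p^R$ exactly, so
\[
\mathbb{P}(E_\sigma\ge m)\;=\;O(n^{-m})
\]
with implicit constant depending on $m,k,d,\ell$.

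Specialising to $m=s+d$ and union-bounding over the $O(n^d)$ choices of $\sigma$ yields the advertised $O(n^{-s})$ failure probability. The only delicate point is the witness bookkeeping: one must ensure every $\tau$ with $X_\tau\ge 2$ contributes at least one \emph{extra} incidence beyond the one already counted in $|K_n^\cup\cap C_\sigma|$, so that the specified set $F$ really has $m+R$ incidences (not $m$); once this is set up cleanly the rest is a routine two-line counting. Note that the hypothesis ``$k$ sufficiently large'' is not actually needed for this particular lemma --- it is harmless because the hidden constants are allowed to depend on $k$, but in fact the argument goes through for every $k\ge 1$.
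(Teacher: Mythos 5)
Your proposal is correct and follows essentially the same route as the paper: reformulate the excess $|K_n^\Sigma\cap C_\sigma^\Sigma|-|K_n^\cup\cap C_\sigma|$ as a sum of multiplicity overcounts, union-bound over witness configurations, estimate each witness by the Hadamard bound \eqref{basicHadamard} applied to $K_n^\Sigma$, observe that the $n^R$ combinatorial factor from choosing the faces is killed by the $p^{m+R}$ from Hadamard, obtain $O(n^{-s-d})$ per $\sigma$, and then union-bound over the $O(n^d)$ choices of $\sigma$. The paper organizes the witness count slightly differently (first choosing a subset $E\subset C_\sigma$ with $1\le|E|\le s+d$ and $|K_n^\Sigma\cap E^\Sigma|\ge|E|+s+d$, then choosing $F\subset[k]\times E$ of that size), but this is the same bookkeeping as your $(R,q_1,\dots,q_R,F)$ parametrization, and your observation that the "$k$ large" hypothesis is not actually used here (the implicit constant merely depends on $k$) is also accurate.
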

\begin{proof}
Let $\sigma\in {{[n]}\choose{d}}$ and consider the event that $|K_n^\cup\cap C_\sigma|\le |K_n^\Sigma\cap C_\sigma^\Sigma|-s-d$. On this event, we have a subset $E$ of $C_\sigma$ such that $1\le |E|\le s+d$ and $ |K_n^\Sigma\cap E^\Sigma|\ge |E|+s+d$.

For any $E$ as above, we have
\begin{align*}
\mathbb{P}( |K_n^\Sigma\cap E^\Sigma|\ge |E|+s+d)&\le \sum_{\substack{F\subset [k]\times E\\|F|=|E|+s+d}} \mathbb{P}(F\subset K_n^\Sigma)\\&\le (k|E|)^{|E|+s+d} \left(\frac{d+1+\ell}{n+\ell}\right)^{|E|+s+d}=O(n^{-(|E|+s+d)}),
\end{align*}
where we used \eqref{basicHadamard} to estimate $\mathbb{P}(F\subset K_n^\Sigma)$.

Therefore,
\begin{align*}
    \mathbb{P}\left(|K_n^\cup\cap C_\sigma|\le |K_n^{\Sigma}\cap C_\sigma^\Sigma|-s-d\right)&\le \sum_{h=1}^{s+d} \sum_{\substack{E\subset C_\sigma\\|E|=h}} \mathbb{P}\left(|K_n^\Sigma\cap E^\Sigma|\ge |E|+s+d\right)\\
    &\le \sum_{h=1}^{s+d} n^h O(n^{-(h+s+d)})=O(n^{-s-d}).
\end{align*}

Using the union bound over the choice of $\sigma$, the statement follows.
\end{proof}

For $A\subset [n]$, let
\[E^d_{\ge 2}(A)=\left\{\sigma\in {{n}\choose{d+1}}\,:\,|\sigma\cap A|\ge 2\right\}.\]
Note that
\[|E^d_{\ge 2}(A)|\le |A|^2n^{d-1}.\]
\begin{lemma}\label{Lemma15}
For any $A\subset [n]$ such that $m=|A|<\frac{\alpha_0 n}{e(d+1+\ell)}$, we have
\[\mathbb{P}(|E^d_{\ge 2}(A)\cap K_n^\cup|\ge k\alpha_0 m n^{d-1})\le \left(\frac{e(d+1+\ell)m}{n\alpha_0 }\right)^{k\alpha_0 m}.\]
\end{lemma}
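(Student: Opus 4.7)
The plan is a straightforward first-moment / union-bound argument. I would union-bound over all $t$-element subsets of $E^d_{\ge 2}(A)$, with $t := k\alpha_0 m n^{d-1}$, and estimate the probability that any given such subset lies in $K_n^\cup$ via Lemma~\ref{unionHadamard}.

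The three ingredients I would combine are: the estimate $|E^d_{\ge 2}(A)|\le m^2 n^{d-1}$ recorded just before the statement; the fact that every diagonal entry of $P_{n,d,\ell}$ equals $\frac{d+1+\ell}{n+\ell}\le \frac{d+1+\ell}n$, so Lemma~\ref{unionHadamard} yields
\[\mathbb{P}(F\subset K_n^\cup)\le \left(\frac{k(d+1+\ell)}n\right)^{|F|}\]
for every fixed $F\subset {{[n]}\choose{d+1}}$; and the standard binomial estimate $\binom{N}{t}\le (eN/t)^t$. Multiplying these together gives
\[\mathbb{P}(|E^d_{\ge 2}(A)\cap K_n^\cup|\ge t)\le \left(\frac{e\,m^2 n^{d-1}}{t}\cdot \frac{k(d+1+\ell)}{n}\right)^{t},\]
and substituting $t = k\alpha_0 m n^{d-1}$ collapses the base to $\frac{e(d+1+\ell)m}{\alpha_0 n}$, which is strictly less than $1$ by the hypothesis $m<\frac{\alpha_0 n}{e(d+1+\ell)}$. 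Since $n^{d-1}\ge 1$ gives $t\ge k\alpha_0 m$, and since the base is below $1$, I can weaken the exponent from $t$ down to $k\alpha_0 m$ and obtain the claimed bound.

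I do not foresee any real obstacle — this is essentially just a union bound. The only minor technicality is that $t$ need not be an integer, which is dealt with by replacing $t$ by $\lceil t\rceil$; this only tightens the conclusion and does not affect the final estimate.
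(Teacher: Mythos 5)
Your argument is correct and is essentially identical to the paper's proof: both union-bound over $\lceil t\rceil$-element subsets of $E^d_{\ge 2}(A)$, apply Lemma~\ref{unionHadamard} with the diagonal entry $\frac{d+1+\ell}{n+\ell}\le\frac{d+1+\ell}{n}$, use $|E^d_{\ge 2}(A)|\le m^2n^{d-1}$ together with $\binom{N}{t}\le(eN/t)^t$, substitute $t=k\alpha_0 m n^{d-1}$ to collapse the base to $\frac{e(d+1+\ell)m}{\alpha_0 n}<1$, and then shrink the exponent to $k\alpha_0 m$. The ceiling point you flag is also exactly how the paper handles it (it sets $t=\lceil k\alpha_0 m n^{d-1}\rceil$ at the outset).
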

\begin{proof}
Let $t=\lceil k\alpha_0 mn^{d-1} \rceil$.
Using Lemma~\ref{unionHadamard}, we have
\begin{multline*}\mathbb{P}(|E^d_{\ge 2}(A)\cap K_n^\cup|\ge t)\le \sum_{\substack{E\subset E_{\ge 2}^d(A)\\|E|=t}} \mathbb{P}(E\subset K_n^\cup)\le {{m^2n^{d-1}}\choose t} \left(\frac{k(d+1+\ell)}{n+\ell}\right)^t\\\le \left(\frac{em^2n^{d-1}k(d+1+\ell)}{tn}\right)^t\le \left(\frac{e(d+1+\ell)m}{n\alpha_0 }\right)^{k\alpha_0 mn^{d-1}} \le \left(\frac{e(d+1+\ell)m}{n\alpha_0 }\right)^{k\alpha_0 m},
\end{multline*}
where we used that $\frac{e(d+1+\ell)m}{n\alpha_0 }<1$.
\end{proof}

Let $\mu=\frac{\alpha_0^2}{2e^3(d+1+\ell)^2}$.

\begin{lemma}\label{Lemma16}
Assume that $k$ is large enough. Then with probability at least $1-O(n^{-s})$, the following holds: for all $A\subset [n]$ such that $0<|A|<\mu n$, we have
\[|E^d_{\ge 2}(A)\cap K_n^\cup|< k\alpha_0 |A|n^{d-1}.\]
\end{lemma}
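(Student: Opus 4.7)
My plan is to apply a union bound over all $A\subset [n]$ with $1\le |A|=m<\mu n$, controlling each term via Lemma~\ref{Lemma15}. Combining Lemma~\ref{Lemma15} with the standard estimate $\binom{n}{m}\le (en/m)^m$, the failure probability is at most
\[\sum_{m=1}^{\lceil \mu n\rceil - 1} \binom{n}{m}\left(\frac{e(d+1+\ell)m}{n\alpha_0}\right)^{k\alpha_0 m}\le \sum_{m=1}^{\lceil \mu n\rceil - 1}(ec^{a})^m\left(\frac{m}{n}\right)^{(a-1)m},\]
where I set $c=e(d+1+\ell)/\alpha_0$ and $a=k\alpha_0$.

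The heart of the argument is to show that the summand $g(m)=(ec^a)^m(m/n)^{(a-1)m}$ decreases geometrically on the whole range $1\le m\le \lceil \mu n\rceil-1$, provided $k$ is large enough. A direct calculation gives
\[\frac{g(m+1)}{g(m)}=ec^a\left(1+\frac{1}{m}\right)^{(a-1)m}\left(\frac{m+1}{n}\right)^{a-1}\le e^ac^a\mu^{a-1},\]
where I used $(1+1/m)^m\le e$ and $(m+1)/n\le \mu$ throughout the summation range. The choice $\mu=1/(2ec^2)$, which is exactly $\alpha_0^2/(2e^3(d+1+\ell)^2)$, reduces this to $e\cdot c^{2-a}\cdot 2^{1-a}$, which is less than any prescribed constant $r<1$ once $a$ is large enough; e.g.\ $r=1/2$ works for $a\ge 4$.

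Given geometric decay with ratio $r<1$, the sum is dominated by its first term:
\[\sum_m g(m)\le \frac{g(1)}{1-r}=O\!\left(n^{-(k\alpha_0-1)}\right).\]
Taking $k$ large enough that $k\alpha_0-1\ge s$, i.e.\ $k\ge (s+1)/\alpha_0$ (and also $\ge 4/\alpha_0$ to get the geometric decay), yields the desired $O(n^{-s})$ bound.

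The main obstacle is not conceptual but bookkeeping: one must verify that the specific constant $\mu=\alpha_0^2/(2e^3(d+1+\ell)^2)$ prescribed in the excerpt is chosen precisely so that $e^ac^a\mu^{a-1}$ goes to $0$ as $a\to\infty$, rather than stabilizing at a value $\ge 1$. The powers of $e$, $c$ and $2$ must balance, which explains the somewhat ad hoc form of $\mu$; beyond this careful numerology, no new probabilistic input beyond Lemma~\ref{Lemma15} and the binomial bound is required.
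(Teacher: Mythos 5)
Your proof is correct and reaches the same conclusion as the paper, but the way you bound the final sum is genuinely different. After the union bound, Lemma~\ref{Lemma15}, and the estimate $\binom{n}{m}\le (en/m)^m$, the paper also arrives at the sum $\sum_m (ec^a)^m (m/n)^{(a-1)m}$; but it then uses the specific value of $\mu$ to rewrite each term as $\bigl(\frac{m}{2\mu n}\bigr)^{(2s+1)m}$ (absorbing the extra factor using $k\alpha_0-1>2s+1$, so both factors become at most $1$ raised to matching exponents), and splits the sum into $m\le\sqrt{n}$ and $m>\sqrt{n}$, bounding the first range crudely by $\sqrt{n}\cdot(2\mu\sqrt{n})^{-(2s+1)}=O(n^{-s})$ and the second by a geometric tail $2^{1-\sqrt n}$. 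You instead verify directly that $g(m)=(ec^a)^m(m/n)^{(a-1)m}$ decays geometrically with ratio $\le e\,c^{2-a}2^{1-a}<1/2$ once $a=k\alpha_0\ge 4$, and then observe the whole sum is $O(g(1))=O(n^{-(k\alpha_0-1)})$. Both are sound; your ratio-test argument is the more conceptual reading of why $\mu=1/(2ec^2)$ is chosen, whereas the paper's split trades that insight for a shorter calculation. Your identification of the cancellation $e^a c^a\mu^{a-1}=e\,c^{2-a}2^{1-a}$ is exactly the ``numerology'' the paper encodes implicitly via the rewrite to $(m/(2\mu n))^{(2s+1)m}$. One small bookkeeping point: the ratio bound uses $(m+1)/n\le\mu$, which only holds for $m+1\le\mu n$; but since the ratio is only applied for $m\le\lceil\mu n\rceil-2$ (so $m+1\le\lceil\mu n\rceil-1\le\mu n$), your chain of inequalities does go through; it is worth stating this range explicitly.
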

\begin{proof}
Combining Lemma~\ref{Lemma15} with the union bound, we obtain that the complement of the event above has probability at most
\begin{align*}\sum_{m=1}^{\lfloor\mu n\rfloor} \sum_{A\in {{[n]}\choose{m}}}& |\mathbb{P}(E^d_{\ge 2}(A)\cap K_n^\cup|\ge k\alpha_0 |A|n^{d-1})\\&\le \sum_{m=1}^{\lfloor\mu n\rfloor} {{n}\choose {m}} \left(\frac{e(d+1+\ell)m}{n\alpha_0 }\right)^{k\alpha_0 m}\\&\le \sum_{m=1}^{\lfloor\mu n\rfloor} \left(\frac{ne}{m}\right)^m  \left(\frac{e(d+1+\ell)m}{n\alpha_0 }\right)^{k\alpha_0 m}\\&=  \sum_{m=1}^{\lfloor\mu n\rfloor} \left(\frac{e^2(d+1+\ell)}{\alpha_0}\right)^m  \left(\frac{e(d+1+\ell)m}{n\alpha_0 }\right)^{(k\alpha_0-1) m}.\end{align*}

Let us choose $k$ such that $k\alpha_0-1>2s+1$. Recalling our assumption that $\alpha_0<1$, we see that $\frac{e^2(d+1+\ell)}{\alpha_0}>1$ and $\frac{e(d+1+\ell)m}{n\alpha_0 }< 1$ for all $1\le m\le \mu n$. Thus, the  sum above is bounded by
\[\sum_{m=1}^{\lfloor\mu n\rfloor} \left(\frac{e^2(d+1+\ell)}{\alpha_0}\right)^{(2s+1)m}  \left(\frac{e(d+1+\ell)m}{n\alpha_0 }\right)^{(2s+1) m}=\sum_{m=1}^{\lfloor\mu n\rfloor} \left(\frac{m}{2\mu n}\right)^{(2s+1)m}.\]

We split this sum into two. First
\[ \sum_{m=1}^{\lfloor \sqrt{n}\rfloor}  \left(\frac{m}{2\mu n}\right)^{(2s+1)m}\le \sqrt{n} \left(\frac{1}{2\mu \sqrt{n} }\right)^{(2s+1)}=O(n^{-s}). \]
Then
\[ \sum_{m=\lfloor \sqrt{n}\rfloor+1}^{\lfloor\mu n\rfloor} \left(\frac{m}{2\mu n}\right)^{(2s+1)m}\le \sum_{h=1}^{\infty} \left(\frac{1}2\right)^{\sqrt{n}+h}= 2^{-\sqrt{n}}=O(n^{-s}). \]
Thus, the statement follows.
\end{proof}

\begin{lemma}\label{Lemma17}
Let $\varepsilon>0$. Then for any large enough constant $k$, with probability $1-O(n^{-s})$ the following holds:

\begin{enumerate}[(1)]
    \item \label{part1} For all $E\subset {{[n]}\choose {d+1}}$, we have
    \[|K_n^\cup \cap E|\ge  |K_n^{\Sigma}\cap E^\Sigma|-\varepsilon k n^d.\]
    \item \label{part2} \[(1-\varepsilon)\frac{kn^d(d+1+\ell)}{(d+1)!}\le |K_n^\cup(d)| \le(1+\varepsilon)\frac{kn^d(d+1+\ell)}{(d+1)!}.\]
 \item  \label{part3} For all  $A\subset [n]$, we have
\[\|A\|\ge (1-\varepsilon)\frac{|A|}{(d+1+\ell)n}.\]

\end{enumerate}

\end{lemma}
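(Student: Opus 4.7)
I would prove the three parts in order, using Hadamard's inequality \eqref{basicHadamard} and the determinantal Bernstein inequality (Lemma~\ref{berstein}) as the main tools.

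For (\ref{part1}), the key observation is that the excess is controlled by the total collision count
\[W=\sum_{\sigma\in\binom{[n]}{d+1}}\binom{|\{i:\sigma\in T_{n,d,\ell}^i\}|}{2}=\sum_{1\le i<j\le k}|T_{n,d,\ell}^i\cap T_{n,d,\ell}^j|,\]
for which $|K_n^\Sigma\cap E^\Sigma|-|K_n^\cup\cap E|\le W$ holds for every $E$. A direct computation gives $\mathbb{E}[W]=O(n^{d-1})$, which is much smaller than $\varepsilon kn^d$. To upgrade this to $\mathbb{P}(W\ge\varepsilon kn^d)=O(n^{-s})$ I would bound the $s$-th moment: for each fixed pair $(i,j)$, applying \eqref{basicHadamard} separately inside the two independent copies $T_{n,d,\ell}^i$ and $T_{n,d,\ell}^j$ yields $\mathbb{E}[|T_{n,d,\ell}^i\cap T_{n,d,\ell}^j|^s]=O(n^{s(d-1)})$ (with the hidden constant depending on $s$, $k$, $d$, $\ell$), after which Markov's inequality applied to $W^s$ gives the claim.

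For (\ref{part2}), the upper bound is immediate from $|K_n^\cup(d)|\le\sum_i|T_{n,d,\ell}^i(d)|$ combined with Lemma~\ref{berstein} applied to each $|T_{n,d,\ell}^i(d)|$, which by Lemma~\ref{bernoillisum} is a sum of Bernoullis with mean $\Tr P_{n,d,\ell}=\binom{n}{d+1}\frac{d+1+\ell}{n+\ell}=(1+o(1))\frac{n^d(d+1+\ell)}{(d+1)!}$; Bernstein here produces a failure probability of $\exp(-cn^d)$, comfortably better than $n^{-s}$ even after a union bound over the $k$ copies. The lower bound is obtained by combining a matching Bernstein lower bound on $|K_n^\Sigma(d)|$ with (\ref{part1}) applied to $E=\binom{[n]}{d+1}$, where I would use a suitably small constant in place of $\varepsilon$ so that the $\varepsilon kn^d$ overcount allowed by (\ref{part1}) is absorbed into the main term $kn^d(d+1+\ell)/(d+1)!$.

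For (\ref{part3}), the identity $w(v)=\deg_{K_n^\cup}(v)/((d+1)|K_n^\cup(d)|)$ together with the upper bound of (\ref{part2}) reduces the claim to a uniform vertex-degree lower bound of the form $\deg_{K_n^\cup}(v)\ge(1-\varepsilon')kn^{d-1}/d!$ for every $v\in[n]$; since the typical degree is $k\mu_v\sim k(d+1+\ell)n^{d-1}/d!$ with $\mu_v=\binom{n-1}{d}\frac{d+1+\ell}{n+\ell}$, this leaves a factor of $(d+1+\ell)$ of slack. Writing $S_v=\{\tau\in\binom{[n]}{d+1}:v\in\tau\}$, first $|K_n^\Sigma\cap([k]\times S_v)|$ is by Lemma~\ref{bernoillisum} a sum of $k|S_v|$ Bernoullis with total mean $k\mu_v$, and for $d\ge 2$ Lemma~\ref{berstein} followed by a union bound over $v$ gives $|K_n^\Sigma\cap([k]\times S_v)|\ge(1-\varepsilon'/2)k\mu_v$ uniformly. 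Second, the per-vertex overcount $|K_n^\Sigma\cap([k]\times S_v)|-\deg_{K_n^\cup}(v)$ is bounded by $\sum_{\tau\in S_v}\binom{|\{i:\tau\in T_{n,d,\ell}^i\}|}{2}$, whose expectation is $O(n^{d-2})$, and which I would control uniformly in $v$ by a high-moment Markov argument exactly analogous to the one used in (\ref{part1}).

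The main technical obstacle is the low-dimensional case $d=1$: there $\mu_v$ is bounded in $n$ and the per-vertex Bernstein bound no longer produces polynomial-in-$n$ tails. In place of it I would use the deterministic bound $\deg_{T_{n,1,\ell}^i}(v)\ge 1$ given by Lemma~\ref{pure} (which already yields $|K_n^\Sigma\cap([k]\times S_v)|\ge k$), leaving only the overcount to be handled; the overcount step is structurally identical to above, but one must take sufficiently many moments for the resulting per-vertex bound of order $n^{-s}$ to survive the union bound over the $n$ vertices.
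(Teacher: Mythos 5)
Your approach to parts (1) and (2) is sound, though it takes a genuinely different route from the paper. The paper derives both from Lemma~\ref{SigmaVSCup}, which is a witness-set union bound: if the overcount in some $C_\sigma$ exceeds $s+d$, there is a small witness $E\subset C_\sigma$ with $|K_n^\Sigma\cap E^\Sigma|\ge |E|+s+d$, whose probability is $O(n^{-(|E|+s+d)})$ by Hadamard. Summing over $\sigma$ and dividing by $d+1$ then controls the total collision count, which is exactly your $W$. Your power-moment-plus-Markov bound on $W$ is a legitimate alternative, and the computation $\mathbb{E}[|T^i\cap T^j|^s]=O(n^{s(d-1)})$ is correct for all $d\ge 1$ (for $d=1$ the sum over the number $r$ of distinct faces is dominated by $r=1$, giving $O(1)$, which is still enough because $W$ is compared against $\varepsilon k n$).

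Part (3) contains a real gap at $d=1$, and the fix you propose does not repair it. For a fixed vertex $v$, the per-vertex overcount $O_v=\sum_{\tau\in S_v}\binom{|\{i:\tau\in T^i\}|}{2}$ is a nonnegative integer random variable with $\mathbb{P}(O_v\ge 1)=\Theta(n^{-1})$ when $d=1$. Consequently every power moment satisfies $\mathbb{E}[O_v^{s'}]\ge 1^{s'}\cdot\mathbb{P}(O_v\ge 1)=\Theta(n^{-1})$, and indeed your own Hadamard bound gives $\mathbb{E}[O_v^{s'}]\le\sum_{r=1}^{s'}\text{(const)}\,|S_v|^r\bigl(\tfrac{2+\ell}{n+\ell}\bigr)^{2r}=O(n^{-1})$, with the dominant term at $r=1$ rather than $r=s'$ (unlike $d\ge 2$, where $|S_v|\asymp n^d$ makes $r=s'$ dominant and the moment is $O(n^{(d-2)s'})$). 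So Markov applied to any power moment gives at best $\mathbb{P}(O_v\ge\varepsilon'k)=O(n^{-1})$, and the union bound over $n$ vertices yields $O(1)$, not $O(n^{-s})$. Taking ``sufficiently many moments'' cannot improve this, because the moment is pinned at $\Theta(n^{-1})$ independently of $s'$. What actually works here — and is what the paper does via Lemma~\ref{SigmaVSCup} — is a witness-set union bound at the level of events, not moments: $\{O_v\ge m\}$ forces a set $E\subset S_v$ with $|E|\le m$ and $|K_n^\Sigma\cap E^\Sigma|\ge |E|+m$, and Hadamard gives $\mathbb{P}(O_v\ge m)=O(n^{-m})$; choosing $m=\varepsilon'k\ge s+1$ (possible since $k$ is a large constant) then survives the union bound over $v$. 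The paper's Lemma~\ref{SigmaVSCup} handles all $d\ge 1$ in one stroke, per set $C_\sigma$ with $\sigma\in\binom{[n]}{d}$, exactly because of this structure; your plan should be revised to use that combinatorial witness argument (or an exponential-moment Chernoff bound, which is the continuous analogue) rather than power moments for the $d=1$ case of part (3).
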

\begin{proof}
Choose $\eta>0$ small enough. We  may assume that the high  probability event provided by Lemma~\ref{SigmaVSCup} occurs. Then for all $\sigma\in {{[n]}\choose{d}}$, we have
\begin{equation}\label{eqSigmaVSCup}
    |C_\sigma\cap K_n^\cup|\ge |K_n^\Sigma\cap C_\sigma^\Sigma|-s-d.
\end{equation}
Summing these inequalities over $\sigma\in {{[n]}\choose{d}}$ and dividing by $d+1$, we have
\[|K_n^\cup(d)|\ge  |K_n^\Sigma(d)|-\frac{s+d}{d+1}{n \choose d}\ge |K_n^\Sigma (d)|-\eta\frac{kn^d(d+1+\ell)}{(d+1)!}\]
provided that $k$ is large enough. Observing that $  |K_n^{\Sigma}\cap E^\Sigma|-|K_n^\cup \cap E|\le |K_n^{\Sigma}(d)|-|K_n^\cup (d)| $, part~\eqref{part1} follows.

Also, obviously
\[|K_n^\cup(d)|\le | K_n^\Sigma(d)|.\]

Observing that $\mathbb{E} |K_n^\Sigma(d)|=(1+o(1)) \frac{kn^d(d+1+\ell)}{(d+1)!}$, and using the concentration inequality in Lemma~\ref{berstein}, we see that with very large probability, we have

\[\left| |K_n^\Sigma (d)|-\frac{kn^d(d+1+\ell)}{(d+1)!}\right|<\eta \frac{kn^d(d+1+\ell)}{(d+1)!}.\]
Thus, on the intersection of the events above, we have
\[(1+\eta)\frac{kn^d(d+1+\ell)}{(d+1)!}\ge |K_n^\cup|\ge (1-2\eta)\frac{kn^d(d+1+\ell)}{(d+1)!}.\]
Thus, part~\eqref{part2} follows.

The number of $d$ dimensional faces of $K_n^\cup$ containing a given $v\in [n]$ can be expressed as
\[\frac{1}d\sum_{\substack{\sigma\in {{[n]}\choose{d}}\\v\in \sigma }} |K_n^\cup\cap C_{\sigma}|\]

By Lemma~\ref{pure}, $T_{n,d,\ell}$ is pure $d$-dimensional complex with complete $d-1$-skeleton, so $ T_{n,d,\ell} \cap C_{\sigma}$ is non-empty. Combining this with \eqref{eqSigmaVSCup}, we get that on the event provided by Lemma~\ref{SigmaVSCup},
\[\frac{1}d\sum_{\substack{\sigma\in {{[n]}\choose{d}}\\v\in \sigma }} |K_n^\cup\cap C_{\sigma}|\ge \frac{1}d\sum_{\substack{\sigma\in {{[n]}\choose{d}}\\v\in \sigma }} \left(|K_n^\Sigma \cap C_{\sigma}^\Sigma|-d-s\right)\ge \frac{1}d{{n-1}\choose{d-1}} (k-d-s)\ge (1-\eta) \frac{kn^{d-1}}{d!}\]
provided that $k$ is large enough.

Thus, assuming that the event in part \eqref{part2} also occurs, for any $A\subset [n]$, we have
\[\|A\|\ge \frac{(1-\eta) \frac{kn^{d-1}}{d!}|A|}{(d+1)|K_n^\cup|}\ge \frac{(1-\eta) \frac{kn^{d-1}}{d!}|A|}{(d+1)(1+\eta)\frac{kn^d(d+1+\ell)}{(d+1)!}} \ge (1-\varepsilon)\frac{|A|}{(d+1+\ell)n}.\]
provided that $\eta$ is small enough, which gives part~\eqref{part3}.
\end{proof}

\begin{lemma}\label{skeletonpart1}
Assume that $k$ is large enough. With probability at least $1-O(n^{-s})$, the following holds: for all $A\subset [n]$ such that $0<|A|<\mu n$, we have
$\|E(A,A)\|< 4\alpha \|A\|$.

\end{lemma}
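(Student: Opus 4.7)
The plan is to reduce the weighted edge count $\|E(A,A)\|$ to a count of top-dimensional faces of $K_n^\cup$ that meet $A$ in at least two vertices, and then apply the bound from Lemma~\ref{Lemma16} together with the normalization estimates of Lemma~\ref{Lemma17}. Throughout I would work on the high-probability event (occurring with probability $1-O(n^{-s})$) on which the conclusions of Lemma~\ref{Lemma16} and all three parts of Lemma~\ref{Lemma17} hold simultaneously; here $\varepsilon>0$ is a small constant to be fixed at the end.

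First I would unwind the definition of the weighted norm. For an edge $e\in K_n^\cup(1)$, $w(e)=\frac{|\{\tau\in K_n^\cup(d)\,:\,e\subset \tau\}|}{{d+1\choose 2}|K_n^\cup(d)|}$, so exchanging the order of summation gives
\[
\|E(A,A)\|=\frac{1}{{d+1\choose 2}\,|K_n^\cup(d)|}\sum_{\tau\in K_n^\cup(d)}{|\tau\cap A|\choose 2}.
\]
Only faces $\tau\in E^d_{\ge 2}(A)\cap K_n^\cup$ contribute, and each contributes at most ${d+1\choose 2}$, which gives $\|E(A,A)\|\le \frac{|E^d_{\ge 2}(A)\cap K_n^\cup|}{|K_n^\cup(d)|}$.

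Now I would apply Lemma~\ref{Lemma16} (valid since $|A|<\mu n$) to bound the numerator by $k\alpha_0|A|n^{d-1}$, and part~\eqref{part2} of Lemma~\ref{Lemma17} to bound the denominator from below by $(1-\varepsilon)\frac{kn^d(d+1+\ell)}{(d+1)!}$. Combining,
\[
\|E(A,A)\|\le \frac{\alpha_0 (d+1)!\,|A|}{(1-\varepsilon)(d+1+\ell)\,n}.
\]
Finally, part~\eqref{part3} of Lemma~\ref{Lemma17} converts $|A|$ back into the weighted norm via $|A|\le \frac{(d+1+\ell)n}{1-\varepsilon}\|A\|$, so $\|E(A,A)\|\le \frac{\alpha_0 (d+1)!}{(1-\varepsilon)^2}\|A\|=\frac{\alpha}{(1-\varepsilon)^2}\|A\|$ by the definition $\alpha_0=\alpha/(d+1)!$. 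Choosing $\varepsilon$ small enough that $(1-\varepsilon)^{-2}<4$ yields $\|E(A,A)\|<4\alpha\|A\|$, as required.

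There is really no serious obstacle: the hard work has already been done in Lemma~\ref{Lemma16} (via the Hadamard bound and a union bound tuned to make $k\alpha_0-1$ large). The only point to keep track of is that the factor $(d+1)!$ absorbed into $\alpha_0$ is precisely what is needed to pass from a raw count of $d$-faces to a weighted quantity normalized by $|K_n^\cup(d)|\asymp kn^d/d!$. A small technical remark is that in Lemma~\ref{Lemma15} the set denoted $E^d(A)$ must be read as $E^d_{\ge 2}(A)$ (as is clear from its proof), which is the object that naturally arises in the computation above.
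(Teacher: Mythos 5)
Your proof is correct and follows essentially the same route as the paper: unwind the weighted norm to express $\|E(A,A)\|$ as a count over $d$-faces meeting $A$ in $\ge 2$ vertices normalized by $|K_n^\cup(d)|$, bound the numerator via Lemma~\ref{Lemma16}, and convert back to $\|A\|$ via parts~\eqref{part2} and~\eqref{part3} of Lemma~\ref{Lemma17}. The only cosmetic difference is the handling of $\varepsilon$: the paper fixes $\varepsilon=\tfrac12$ so that $(1-\varepsilon)^{-2}=4$ and relies on the strict inequality already present in Lemma~\ref{Lemma16}, while you take $\varepsilon$ slightly smaller, which is equally valid. Your remark that $E^d(A)$ in Lemmas~\ref{Lemma15}--\ref{Lemma16} should read $E^d_{\ge 2}(A)$ is also accurate.
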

\begin{proof}
Choose $\varepsilon=\frac{1}2$. Assuming that $k$ is large enough, we can restrict our attention to the case when the events in Lemma~\ref{Lemma16} and Lemma~\ref{Lemma17} both occur. Then for $0<|A|<\mu n$, we have

\begin{multline*}
    \|E(A,A)\|\le \frac{{{d+1}\choose{2}}|E_{\ge 2}^d(A)\cap K_n^\cup|}{{{d+1}\choose{2}}|K_n^\cup|}\le \frac{ k\alpha_0 |A|n^{d-1}}{(1-\varepsilon)\frac{kn^d(d+1+\ell)}{(d+1)!}}\\ \le \frac{\alpha}{1-\varepsilon}\frac{|A|}{(d+1+\ell)n}\le \frac{\alpha}{(1-\varepsilon)^2}\|A\|\le 4\alpha\|A\|.
\end{multline*}
\end{proof}

\begin{lemma}\label{skeletonpart2}
Assume that $k$ is large enough. With probability at least $1-O(n^{-s})$, the following holds: for all $A\subset [n]$ such that $\mu n\le |A|$, we have
$\|E(A,A)\|\le 4 \|A\|^2$.
\end{lemma}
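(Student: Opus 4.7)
The idea is to show that for every $A\subset [n]$ with $|A|\geq \mu n$, both $\|A\|$ and $\|E(A,A)\|$ match their ``uniform'' predictions $|A|/n$ and $|A|^2/n^2$ up to factors $(1\pm O(\varepsilon))$. Then
\[\frac{\|E(A,A)\|}{\|A\|^2}\leq \frac{1+O(\varepsilon)}{(1-O(\varepsilon))^2},\]
which is comfortably below $4$ once $\varepsilon$ is small. The catch is that Lemma~\ref{Lemma17}\eqref{part3} only gives $\|A\|\geq (1-\varepsilon)|A|/((d+1+\ell)n)$, losing a factor $(d+1+\ell)^2$ in the final ratio, so the first task is to sharpen that bound via concentration.

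For the sharpened lower bound, use the identity $(d+1)|K_n^\cup|\,\|A\| = \sum_{\tau\in K_n^\cup} |\tau\cap A|$, and sum the inequality of Lemma~\ref{SigmaVSCup} over $\sigma\in {{[n]}\choose{d}}$ weighted by $|\sigma\cap A|$ to get
\[\sum_{\tau\in K_n^\cup}|\tau\cap A| \;\geq\; \sum_{i=1}^k\sum_{\tau\in T_{n,d,\ell}^{i}}|\tau\cap A| \;-\; O(|A|n^{d-1}),\]
where the error is negligible next to the leading term of order $kn^d$ provided $k$ is large. Each single-copy sum decomposes as
\[\sum_{\tau\in T_{n,d,\ell}^i} |\tau\cap A| = \sum_{j=1}^{d+1}\bigl|T_{n,d,\ell}^i \cap E^d_{\geq j}(A)\bigr|,\qquad E^d_{\geq j}(A) := \Bigl\{\tau\in {{[n]}\choose{d+1}} : |\tau\cap A|\geq j\Bigr\},\]
and each $\bigl|T_{n,d,\ell}^i\cap E^d_{\geq j}(A)\bigr|$ is controlled by Lemma~\ref{berstein}. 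Since $\mathbb{E}|T_{n,d,\ell}^i\cap E^d_{\geq j}(A)|$ is of order $n^d$ when $|A|\geq \mu n$, the failure probability is $\exp(-\Omega(n^d))$, easily absorbed by a union bound over $A$, $i$ and $j$. Combining with Lemma~\ref{Lemma17}\eqref{part2} delivers $\|A\|\geq (1-O(\varepsilon))|A|/n$.

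The matching upper bound on $\|E(A,A)\|$ uses the analogous telescope
\[\sum_{\tau\in T_{n,d,\ell}^i}\binom{|\tau\cap A|}{2} = \sum_{j=2}^{d+1}(j-1)\,\bigl|T_{n,d,\ell}^i \cap E^d_{\geq j}(A)\bigr|,\]
together with the trivial comparison $\sum_{\tau\in K_n^\cup}\binom{|\tau\cap A|}{2} \leq \sum_{i=1}^k\sum_{\tau\in T_{n,d,\ell}^i}\binom{|\tau\cap A|}{2}$ (no Lemma~\ref{SigmaVSCup} correction is needed in this direction) and the same concentration estimates. Dividing by $\binom{d+1}{2}|K_n^\cup|$ gives $\|E(A,A)\|\leq (1+O(\varepsilon))|A|^2/n^2$, and together with the sharpened lower bound on $\|A\|$ this closes the proof.

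The key technical obstacle is ensuring the union bound over the $2^n$ choices of $A$ with $|A|\geq \mu n$ goes through. This is where $k$ needs to be taken sufficiently large, both to make the Lemma~\ref{SigmaVSCup} correction negligible and to keep the $\exp(-\Omega(n^d))$ failure probabilities from Lemma~\ref{berstein} well below $2^{-n}$ (which is easy for $d\geq 2$ and requires slightly more care for $d=1$).
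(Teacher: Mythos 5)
Your proof is correct and follows essentially the same route as the paper: express $\|A\|$ and $\|E(A,A)\|$ as (normalized) sums of $|\tau\cap A|$ and $\binom{|\tau\cap A|}{2}$ over $\tau\in K_n^\cup(d)$, decompose by intersection-size strata, concentrate each stratum count with Lemma~\ref{berstein}, transfer from $K^\Sigma$ to $K^\cup$ (Lemma~\ref{SigmaVSCup} in the downward direction, trivially upward), and union-bound over all $A$. The identity you use to sharpen Lemma~\ref{Lemma17}\eqref{part3}, $\,(d+1)|K_n^\cup(d)|\,\|A\|=\sum_{\tau\in K_n^\cup}|\tau\cap A|$, is exactly the paper's line \eqref{Anormbecs} in a different notation, and your cumulative sets $E^d_{\ge j}(A)$ play the same role as the paper's exact strata $E^d_i(A)$.

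Two small differences worth noting. First, your cumulative version has the advantage that $|E^d_{\ge j}(A)|\ge \binom{|A|}{d+1}=\Omega(n^{d+1})$ for all $j$ once $|A|\ge\mu n$, so the expectations stay $\Omega(n^d)$ for \emph{every} $|A|\ge\mu n$; the paper's exact strata $E^d_i(A)$ can collapse for small $i$ when $|A|$ is near $n$, forcing it to treat $|A|>\tfrac34 n$ as a separate (easy) case via $\|A\|\ge\tfrac12$. Your reindexing quietly dispenses with that case split. Second, you apply Bernstein to each copy $T^i_{n,d,\ell}$ individually and union-bound over $i$, which gives failure probability $\exp(-\Omega(n^d))$ per event and is tight against the $2^n$ union bound when $d=1$; the paper instead applies Bernstein to $K^\Sigma$ directly, picking up the factor $k$ in the expectation and hence failure probability $\exp(-\Omega(kn^d))$, which is pushed below $4^{-n}$ by taking $k$ large regardless of $d$. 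You flagged the $d=1$ issue yourself; the clean fix is precisely to concentrate on $K^\Sigma$ rather than on the individual copies, as the paper does.
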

\begin{proof}
Let \[E^d_i(A)=\left\{\sigma\in {{n}\choose{d+1}}\,:\,|\sigma\cap A|= i\right\}.\]

With the notation $m=|A|$, we have

\[|E^d_i(A)|={{m}\choose{i}}{{n-m}\choose{d+1-i}}.\]

Note that we have a $c>0$ such that for $\mu n\le |A|\le\frac{3}{4}n$, and $0\le i\le d+1$, we have
\[\mathbb{E}|E^d_i(A)\cap T_{n,d,\ell}|\ge cn^{d}\ge cn,\text{ and thus, }\mathbb{E}|E^d_i(A)^\Sigma\cap T_{n,d,\ell}^\Sigma|\ge  kcn.\]

Thus, it follows from Lemma~\ref{berstein} that for $\eta=\frac{1}6$, we have

\begin{align}\mathbb{P}\Big(\left||E^d_i(A)^\Sigma\cap K_n^\Sigma|-\mathbb{E}|E^d_i(A)^\Sigma\cap K_n^{\Sigma}|\right|&\le \frac{\eta}2 \mathbb{E}|E^d_i(A)^\Sigma\cap K_n^{\Sigma}|\Big)\nonumber\\&\ge 1-2\exp\left(-\frac{\eta^2ckn}{16}\right)\nonumber \\&\ge 1-2\cdot 4^{-n}\label{bottleneck}
\end{align}
provided that $k$ is large enough.

Therefore, with probability at least $1-2^{1-n}$, we have
\begin{equation}\label{eqEdiBern}
    \left||E^d_i(A)^\Sigma\cap K_n^{\Sigma}|-\mathbb{E}|E^d_i(A)^\Sigma\cap K_n^{\Sigma}|\right|\le \frac{\eta}2 \mathbb{E}|E^d_i(A)^\Sigma\cap K_n^{\Sigma}|
\end{equation}
for all $\mu n\le |A|\le \frac{3}4n$. Applying part~\eqref{part1} of Lemma~\ref{Lemma17} with the choice of $\varepsilon=\frac{c\eta}2$, it follows that with probability at least $1-O(n^{-s})$, we have

\[\left||E^d_i(A)\cap K_n^\cup|-|E^d_i(A)^\Sigma\cap K_n^{\Sigma}|\right|\le \frac{\eta}2\mathbb{E}|E^d_i(A)^\Sigma\cap K_n^\Sigma|=\frac{\eta}2\frac{k(d+1+\ell)}{n+\ell} |E^d_i(A)|\]
for all $\mu n\le |A|\le \frac{3}4n$.

Combining this with \eqref{eqEdiBern}, we see that with probability at least $1-O(n^{-s})$, we have
\[(1-\eta)\frac{k(d+1+\ell)}{n+\ell} |E^d_i(A)|\le |E^d_i(A)\cap K_n^\cup|\le (1+\eta)\frac{k(d+1+\ell)}{n+\ell} |E^d_i(A)|\]
for all $\mu n\le |A|\le \frac{3}4n$.

Note that on this event
\begin{align*}\|E(A,A)\|{{d+1}\choose 2}|K_n^\cup(d)|&=\sum_{i=2}^{d+1} {{i}\choose {2}} |E^d_i(A)\cap K_n^\cup|\\&\le (1+\eta) \frac{k(d+1+\ell)}{n+\ell} \sum_{i=2}^{d+1} {{i}\choose {2}}  {{m}\choose{i}}{{n-m}\choose{d+1-i}}\\&=(1+\eta)\frac{k(d+1+\ell)}{n+\ell}{{m}\choose{2}}{{n-2}\choose{d-1}}, \end{align*}
and
\begin{align}\label{Anormbecs}\|A\|(d+1)|K_n^\cup(d)|&=\sum_{i=1}^{d+1} i |E^d_i(A)\cap K_n^\cup|\\&\ge (1-\eta)\frac{k(d+1+\ell)}{n+\ell}  \sum_{i=1}^{d+1} i  {{m}\choose{i}}{{n-m}\choose{d+1-i}}\nonumber\\&=(1-\eta)\frac{k(d+1+\ell)}{n+\ell} m{{n-1}\choose{d}}.\nonumber \end{align}

By part~\eqref{part2} of Lemma~\ref{Lemma17}, we have \begin{equation}\label{eqcorLemma17}
    (1-\eta)\frac{k(d+1+\ell)}{n+\ell}{n\choose{d+1}}\le|K_n^\cup(d)|\le (1+\eta)\frac{k(d+1+\ell)}{n+\ell}{n\choose{d+1}}
\end{equation}
with probability at least $1-O(n^{-s}).$

Thus, on the intersection of all the events above, for all $\mu n\le |A|\le \frac{3}4n$, we have

\begin{align*}\frac{\|E(A,A)\|}{\|A\|^2}&=\frac{2(d+1)}d \frac{\left(\|E(A,A)\|{{d+1}\choose 2}|K_n^\cup(d)|\right) |K_n^\cup(d)|}{\left(\|A\|(d+1)|K_n^\cup(d)|\right)^2}\\&\le \left(\frac{1+\eta}{1-\eta}\right)^2\frac{2(d+1)}d \frac{{{m}\choose{2}}{{n-2}\choose{d-1}}{{n}\choose{d+1}}}{\left(m{{n-1}\choose{d}}\right)^2}\\&=\left(\frac{1+\eta}{1-\eta}\right)^2 \frac{(m-1)n}{m(n-1)}\\&\le \left(\frac{1+\eta}{1-\eta}\right)^2\le 4.
\end{align*}

By \eqref{Anormbecs} and \eqref{eqcorLemma17}, on the intersection of all the events above, for all $|A|=\frac{3}4n$, we have

\[\|A\|\ge \frac{(1-\eta)}{1+\eta} \frac{\frac{3}4n{{n-1}\choose{d}}}{(d+1){{n}\choose{d+1}}}\ge \frac{1}2\]
for any large enough $n$. Obviously, then we have $\|A\|\ge \frac{1}2$ for any  $|A|\ge \frac{3}4n$. Thus, in this case
$\|E(A,A)\|\le 1\le 4 \|A\|^2.$
\end{proof}

Lemma~\ref{lemmaskeleton} follows by combining Lemma~\ref{skeletonpart1} and Lemma~\ref{skeletonpart2}.

\begin{remark}\label{remarkbottleneck}
    Inequality \eqref{bottleneck} requires that $k=\Omega(c^{-1})$, which by the choice of $c$ means that $k=\Omega(\alpha^{-2})$. Typically this is the bottleneck for the choice of $k$.
\end{remark}

\section{The proof of Theorem~\ref{thm1}}\label{secfinal}

\begin{lemma}\label{NormVSSize}
Let $d\ge 1$ and $\ell\ge 0$ be integers, and let $\mu,s>0$. Consider $K_n^\cup=T_{n,d,\ell}^\cup$. Let $\mu'=\frac{\mu}{4e(d+1)!}$. For any large enough constant $k$, with probability at least $1-O(n^{-s})$ the following event occurs: For any $0\le i\le d-1$ and a minimal $A\in C^i(K_n^\cup,\mathbb{F}_2)$ such that $\|A\|_{K_n^\cup}\ge \mu$, we have
\[\min_{B\in A+B^i(\Delta_{[n]},\mathbb{F}_2)} |B|\ge \mu'n^{i+1}.\]
\end{lemma}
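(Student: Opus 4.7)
The plan is to prove the contrapositive in uniform form: with probability $1-O(n^{-s})$, every $B\subset \binom{[n]}{i+1}$ with $|B|\le \mu'n^{i+1}$ satisfies $\|B\|_{K_n^\cup}<\mu$, simultaneously for all $i\in\{0,\ldots,d-1\}$. Once this is in hand, the lemma follows immediately: since $K_n^\cup$ has a complete $(d-1)$-skeleton, $B^i(K_n^\cup,\mathbb{F}_2)=B^i(\Delta_{[n]},\mathbb{F}_2)$, so the cosets coincide; if a $K_n^\cup$-minimal cochain $A$ with $\|A\|_{K_n^\cup}\ge \mu$ had a representative $B$ of its coset with $|B|<\mu'n^{i+1}$, then $K_n^\cup$-minimality would give $\|A\|_{K_n^\cup}\le\|B\|_{K_n^\cup}<\mu$, a contradiction.

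First I would reduce the norm to an incidence count. Double counting pairs $(\sigma,\tau)$ with $\sigma\in B$, $\tau\in K_n^\cup(d)$, $\sigma\subset\tau$ — using that each $\tau$ contains at most $\binom{d+1}{i+1}$ such $\sigma$ — and plugging into the definition of $w_{K_n^\cup}$ yields
\[\|B\|_{K_n^\cup}\le \frac{|N_0(B)\cap K_n^\cup(d)|}{|K_n^\cup(d)|},\]
where $N_0(B):=\{\tau\in\binom{[n]}{d+1}:\sigma\subset\tau\text{ for some }\sigma\in B\}$ satisfies the crude bound $|N_0(B)|\le |B|\binom{n-i-1}{d-i}$. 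By part~\eqref{part2} of Lemma~\ref{Lemma17}, with probability $1-O(n^{-s})$ we have $|K_n^\cup(d)|\ge \tfrac12 k(d+1+\ell)n^d/(d+1)!$, so on this event it suffices to prove that uniformly over all $B$ of size at most $\mu'n^{i+1}$, $|N_0(B)\cap K_n^\cup(d)|<t_0:=\tfrac{\mu}{2} k(d+1+\ell)n^d/(d+1)!$.

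The tail estimate comes from iterating Lemma~\ref{unionHadamard}: for any fixed $S\subset\binom{[n]}{d+1}$,
\[\mathbb{P}\bigl(|S\cap K_n^\cup(d)|\ge t\bigr)\le \binom{|S|}{t}\left(\tfrac{k(d+1+\ell)}{n+\ell}\right)^t\le \left(\tfrac{e|S|k(d+1+\ell)}{t(n+\ell)}\right)^t.\]
For $|S|\le \mu' n^{d+1}/(d-i)!$ and $t=t_0$, a routine calculation reduces the base of this exponential to at most $\frac{2e\mu'(d+1)!}{\mu(d-i)!}=\frac{1}{2(d-i)!}\le \tfrac12$ — this is precisely the calibration behind the choice $\mu'=\mu/(4e(d+1)!)$. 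Hence each fixed $B$ of size at most $\mu'n^{i+1}$ fails with probability at most $2^{-t_0}=e^{-\Theta(kn^d)}$.

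Finally I would union bound over $B$ (and over $i$, which only costs a constant factor). The number of subsets $B\subset\binom{[n]}{i+1}$ of size at most $\mu'n^{i+1}$ is at most $e^{C(\mu',d)\,n^{i+1}}$, so the total failure probability is at most $e^{C(\mu',d)\,n^{i+1}-\Theta(kn^d)}$, which is $O(n^{-s})$ for any fixed $s$ as soon as $k$ is a sufficiently large constant. The main obstacle is the borderline case $i=d-1$, where both the combinatorial factor and the tail exponent live at the same exponential order $n^d$; it is resolved exactly because the tail exponent $\Theta(kn^d)$ grows linearly in $k$ while the combinatorial count is $k$-independent, so taking $k>C(\mu',d)/\log 2$ closes the gap.
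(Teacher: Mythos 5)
Your argument is correct and follows essentially the same route as the paper's proof: reduce to showing that, with high probability, every $B\subset\binom{[n]}{i+1}$ with $|B|<\mu' n^{i+1}$ has $\|B\|_{K_n^\cup}<\mu$, bound the number of top faces incident to $B$ via the Hadamard-type estimate of Lemma~\ref{unionHadamard}, union-bound over all (at most $d\,2^{n^d}$) choices of $B$, and convert the count to a norm via part~\eqref{part2} of Lemma~\ref{Lemma17}. Your tracking of the constants (in particular the factor $(d+1+\ell)$ and the choice of threshold $t_0$, rather than the paper's $\kappa k n^d$ with $\kappa=2e\mu'$) is in fact cleaner and patches a couple of small arithmetic slips in the paper's version of the same calculation.
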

\begin{proof}
Assume that for a minimal $A\in C^i(K_n^\cup,\mathbb{F}_2)$ such that $\|A\|_{K_n^\cup}\ge \mu$, we have
\[\min_{B\in A+B^i(\Delta_{[n]},\mathbb{F}_2)} |B|< \mu'n^{i+1}.\] Let $B=\argmin_{B\in A+B^i(\Delta_{[n]},\mathbb{F}_2)} |B|$. Then, on one hand $|B|<\mu'n^{i+1}$. On the other hand, since $A$ was minimal, $\|B\|_{K_n^\cup}\ge \|A\|_{K_n^\cup}\ge \mu$.

Thus, it is enough to prove that if $k$ is large enough, then with probability at least $1-O(n^{-s})$ the following event occur:\[\text{ For any $0\le i\le d-1$ and  $B\in C^i(K_n^\cup,\mathbb{F}_2)$ such that $|B|< \mu' n^{i+1}$, we have $\|B\|_{K_n^\cup}<\mu$.}\]

For a $B$ like above, let \[E_B=\left\{\sigma\in {{[n]}\choose{d+1}}\,:\, \tau\subset \sigma \text{ for some }\tau\in B\right\}.\]
Then $|E_B|\le |B|n^{d-i}\le \mu'n^{d+1}$. Then the argument of Lemma~\ref{Lemma15} gives us that for $\kappa=2e\mu'$, we have
\[\mathbb{P}(|E_B\cap K_n^\cup|\ge \kappa k n^{d})\le {{\mu'n^{d+1}}\choose {\kappa k n^d}} \left(\frac{k(d+1+\ell)}{n+\ell}\right)^{\kappa k n^d}\le \left(\frac{e\mu'}{\kappa}\right)^{\kappa k n^d}  \le 2^{-\kappa k n^d}.\]

The number of possible choices of $B$ is at most $d2^{n^d}$. Thus, by the union bound, if $k$ is large enough then with probability at least $1-O(n^{-s})$, we have $|E_B\cap K_n^\cup|<\kappa k n^{d}$ for all $0\le i\le d-1$ and  $B\in C^i(K_n^\cup,\mathbb{F}_2)$ such that $|B|< \mu' n^{i+1}$. With the choice of $\varepsilon=\frac{1}2$, on the event provided by part~\eqref{part2} of Lemma~\ref{Lemma17}, we have
\[\|B\|\le \frac{{{d+1}\choose{i+1}}|E\cap K_n^\cup|}{{{d+1}\choose{i+1}}|K_n^\cup(d)|}<\frac{\kappa k n^{d}}{\frac{kn^d}{2(d+1)!}}=2\kappa(d+1)!=\mu.\]
\end{proof}

The next theorem is clearly stronger than Theorem~\ref{thm1}.
\begin{theorem}\label{thmstronger}
    Let $d\ge 1$ and $\ell\ge 0$ be integers, and let $s>0$. Then there is a $\beta=\beta(d,\ell,s)>0$ such that for all large enough constant $k$, we have
    \[\mathbb{P}(\mathfrak{h}(T_{n,d,\ell}^\cup)<\beta)=O(n^{-s}).\]
\end{theorem}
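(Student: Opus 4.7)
The plan is to prove the theorem by induction on $d$, with $\ell$ and $s$ free. Fix $d\ge 1$, $\ell\ge 0$, $s>0$, set $K_n^\cup=T_{n,d,\ell}^\cup$, and assume inductively that the statement holds in all smaller dimensions and for all parameter values; the base case $d=1$ is included, since then the link-expansion hypothesis of Theorem~\ref{evrakaufman} on faces $\sigma$ with $|\sigma|\ge 1$ is vacuous. The inductive step combines the local-to-global criterion of Evra and Kaufman (Theorem~\ref{evrakaufman}) for cochains of small $K_n^\cup$-weight with a direct global argument, based on Lemmas~\ref{NormVSSize} and \ref{Deltaexpansion}, for cochains of large $K_n^\cup$-weight.

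\textbf{Small-weight regime.} By Lemma~\ref{Lemmalinks}, for $\sigma\in\binom{[n]}{\ell_2}$ the link $\Lk(\sigma,K_n^\cup)$ is distributed as $T_{n-\ell_2,\,d-\ell_2,\,\ell+\ell_2}^\cup$. For $1\le\ell_2\le d-1$ the induction hypothesis (with $s$ replaced by $s+d$) gives $\mathfrak{h}(\Lk(\sigma,K_n^\cup))\ge\beta'$ with probability $1-O(n^{-s-d})$, and for $0\le\ell_2\le d-1$ Lemma~\ref{lemmaskeleton} gives that $\Lk(\sigma,K_n^\cup)$ is an $\alpha$-skeleton expander with probability $1-O(n^{-s-d})$. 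Union-bounding over the $O(n^{d-1})$ choices of $\sigma$, both properties hold simultaneously on an event of probability $1-O(n^{-s})$, and on that event Theorem~\ref{evrakaufman} supplies constants $\bar\varepsilon,\bar\mu>0$ such that $\|\delta_i A\|_{K_n^\cup}\ge\bar\varepsilon\|A\|_{K_n^\cup}$ for every minimal $A\in C^i(K_n^\cup,\mathbb{F}_2)$ with $\|A\|_{K_n^\cup}\le\bar\mu$ and every $0\le i\le d-1$.

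\textbf{Large-weight regime.} For a minimal $A$ with $\|A\|_{K_n^\cup}>\bar\mu$, Lemma~\ref{NormVSSize} produces a $\Delta_{[n]}$-minimum-size representative $B\in A+B^i(\Delta_{[n]},\mathbb{F}_2)$ with $|B|\ge\mu' n^{i+1}$. Such a $B$ is $\Delta_{[n]}$-minimal, so by Lemma~\ref{Deltaexpansion} $|\delta_\Delta B|\ge|B|n/(i+2)$, and since $A-B\in B^i(\Delta_{[n]},\mathbb{F}_2)$ we have $\delta A=\delta B$ in $C^{i+1}(\Delta_{[n]},\mathbb{F}_2)$, so $|\delta A|\ge c_0 n^{i+2}$ for some constant $c_0=c_0(d,\ell)>0$. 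It thus suffices to show that, with probability $1-O(n^{-s})$, every $A\in C^i(\Delta_{[n]},\mathbb{F}_2)$ with $|\delta A|\ge c_0 n^{i+2}$ satisfies $\|\delta_i A\|_{K_n^\cup}\ge\beta''$ for some fixed $\beta''>0$. For $i\le d-2$ the complete $(i+1)$-skeleton of $K_n^\cup$ yields
\[
\|\delta_i A\|_{K_n^\cup}=\frac{\sum_{\tau\in\delta A}\deg_{K_n^\cup}(\tau)}{\binom{d+1}{i+2}\,|K_n^\cup(d)|},
\]
while for $i=d-1$, $\|\delta_{d-1}A\|_{K_n^\cup}=|\delta A\cap K_n^\cup(d)|/|K_n^\cup(d)|$. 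In either case the numerator is a weighted sum $\sum_{\sigma\in\binom{[n]}{d+1}}a_\sigma\,\mathbf{1}_{\sigma\in K_n^\cup(d)}$ with coefficients bounded by $\binom{d+1}{i+2}$ whose expectation is $\Theta(kn^d)$ (using $|\delta A|\ge c_0 n^{i+2}$ and the marginals of $K_n^\cup$). Decomposing this sum as $\sum_{j\ge 1}|E_j\cap K_n^\cup(d)|$ with $E_j=\{\sigma:a_\sigma\ge j\}$, transferring to $K_n^\Sigma$ via part~\eqref{part1} of Lemma~\ref{Lemma17}, and applying Lemma~\ref{berstein} layer-by-layer, we obtain for each fixed such $A$ the estimate $\|\delta_i A\|_{K_n^\cup}\ge\beta''$ (using part~\eqref{part2} of Lemma~\ref{Lemma17} to control the denominator) with failure probability $\exp(-ckn^d)$. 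Since the number of candidate $A$'s is at most $2^{\binom{n}{i+1}}\le 2^{n^d}$, a union bound renders the bad event negligible provided $k$ is a sufficiently large constant. Combining both regimes, with probability $1-O(n^{-s})$ we have $\|\delta_i A\|_{K_n^\cup}/\|A\|_{K_n^\cup}\ge\min(\bar\varepsilon,\beta'')=:\beta$ for every $0\le i\le d-1$ and every nonempty minimal $A$ (using $\|A\|_{K_n^\cup}\le 1$ in the large-weight case), so $\mathfrak{h}(K_n^\cup)\ge\beta$, closing the induction.

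\textbf{Main obstacle.} The substantive difficulty is the large-weight regime: $\|\delta_i A\|_{K_n^\cup}$ must be lower-bounded uniformly over the exponentially many minimal cochains $A$ with $\|A\|_{K_n^\cup}>\bar\mu$. Defeating the $2^{O(n^d)}$ union bound requires Bernstein-type tails with exponent $\Omega(kn^d)$, and this is precisely what forces $k$ to be a sufficiently large constant. The essential role of Lemma~\ref{NormVSSize} is to guarantee that every such $A$ has a $\Delta_{[n]}$-large representative, so that Meshulam--Wallach (Lemma~\ref{Deltaexpansion}) supplies the $\Omega(n^{i+2})$ lower bound on $|\delta A|$ driving the concentration.
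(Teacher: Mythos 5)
Your proposal follows the paper's proof essentially verbatim: induction on $d$, with the small-weight regime handled by Theorem~\ref{evrakaufman} (links expand inductively, skeletons expand by Lemma~\ref{lemmaskeleton}), and the large-weight regime handled by combining Lemma~\ref{NormVSSize}, Meshulam--Wallach (Lemma~\ref{Deltaexpansion}), Bernstein concentration (Lemma~\ref{berstein}), and a union bound over the $\le d\,2^{n^d}$ candidate cochains, with $k$ large enough to beat the entropy. The only cosmetic divergence is your ``layer-by-layer'' decomposition of the numerator via $E_j=\{\sigma:a_\sigma\ge j\}$; the paper simply uses the single set $E=\{\sigma:a_\sigma\ge 1\}$ and the trivial bound $\sum_\sigma a_\sigma\mathbf{1}_{\sigma\in K_n^\cup}\ge|E\cap K_n^\cup|$, which already suffices.
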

\begin{proof}
We prove by induction on $d$. Assume that we proved the statement for all $d',\ell',s'$ such that $d'<d$.

Let
\[\beta=\min_{i=1}^{d-1} \beta(d-i,\ell+i,s+i)\]
if $d>1$, for $d=1$, we can set $\beta=1$.

Lemma~\ref{Lemmalinks} says that for any $1\le i\le d-1$ and $\sigma\in {{[n]}\choose{i}}$, the link $\Lk(\sigma,T_{n,d,\ell}^\cup)$ has the same distribution as $T_{n-i,d-i,\ell+i}^\cup$. Thus, by the induction hypothesis and the choice of $\beta$, we have
\[\mathbb{P}(\mathfrak{h}(\Lk(\sigma,T_{n,d,\ell}^\cup))<\beta)=O(n^{-(s+i)})\]
provided that $k$ is large enough.

Then by the union bound, we get that with probability at least $1-O(n^{-s})$, the complex $T_{n,d,\ell}^\cup$ satisfies $(a)$ of Theorem~\ref{evrakaufman} , that is, for all $\sigma \in T_{n,d,\ell}^\cup$ such that $1\le |\sigma|\le d-1$, we have
    \[\mathfrak{h}(\Lk(\sigma,T_{n,d,\ell}^\cup))\ge \beta.\]

Let $\alpha=\alpha(d,\beta)$ be the constant provided by Theorem~\ref{evrakaufman}. In a similar manner as above, for all large enough $k$, we can combine Lemma~\ref{lemmaskeleton} and the union bound to obtain that with probability at least $1-O(n^{-s})$, the complex $T_{n,d,\ell}^\cup$ satisfies $(b)$ of Theorem~\ref{evrakaufman} , that is, for all $\sigma \in T_{n,d,\ell}^\cup$ such that $0\le |\sigma|\le d-1$, the link $\Lk(\sigma,T_{n,d,\ell}^\cup)$ is an $\alpha$-skeleton expander.

Thus, on the intersection of the events above (which has probability at least $1-O(n^{-s})$),  Theorem~\ref{evrakaufman} can be applied to give that for some $\bar{\varepsilon},\bar{\mu}>0$ the following holds:

For any $0\le i\le d-1$ and a minimal $A\subset C^i(T_{n,d,\ell},\mathbb{F}_2)$ such that $\|A\|\le \bar{\mu}$, we have

\begin{equation}\label{kicsiexpansion}\|\delta_{i,\mathbb{F}_2} A\|\ge \bar{\varepsilon}\|A\|. \end{equation}

Next we prove that with  probability at least $1-O(n^{-s})$ the following holds:

For any $0\le i\le d-1$ and a minimal $A\subset C^i(T_{n,d,\ell},\mathbb{F}_2)$ such that $\|A\|\ge \bar{\mu}$, we have

\begin{equation}\label{nagyexpansion}\|\delta_{i,\mathbb{F}_2} A\|\ge \beta' \|A\|, \end{equation}
where $\beta'$ will be specified later.

To prove this, we can restrict our attention to the high probability event provided by Lemma~\ref{NormVSSize}.

On this event, if $A\subset C^i(T_{n,d,\ell})$ is minimal such that $\|A\|\ge \bar{\mu}$, then for \[B=\argmin_{B\in A+B^i(\Delta_{[n]},\mathbb{F}_2)} |B|,\] we have $\|\delta_{i,\mathbb{F}_2} A\|=\|\delta_{i,\mathbb{F}_2} B\|$, $B$ is $\Delta_{[n]}$-minimal and $|B|\ge \bar{\mu}' n^{i+1}$ for $\bar{\mu}'=\frac{\bar{\mu}}{4e(d+1)!}$.

Then to show \eqref{nagyexpansion} holds with high probability, it is enough to show that with  probability at least $1-O(n^{-s})$ the following holds:

For any $0\le i\le d-1$ and a $\Delta_{[n]}$-minimal $B\subset C^i(\Delta_{[n]})$ such that $|B|\ge \bar{\mu}'n^{i+1}$, we have

\begin{equation}\label{deltaB}\|\delta_{i,\mathbb{F}_2}^{T_{n,d,\ell}} B\|\ge \beta'. \end{equation}

Let $D=\delta_{i,\mathbb{F}_2}^{\Delta_{[n]}} B$. By Lemma~\ref{Deltaexpansion} , we see that

\[|D|\ge \frac{n|B|}{i+2}.\]

Let $E$ be all the elements of ${[n]}\choose{d+1}$ which contains an element of $D$, then
\[|E|\ge \frac{{{n-i-2}\choose {d-i-1}}|D|}{{{d+1}\choose{i+2}}}\ge \frac{n{{n-i-2}\choose {d-i-1}}|B|}{(i+2){{d+1}\choose{i+2}}}\ge  \frac{n{{n-i-2}\choose {d-i-1}}\bar{\mu}' n^{i+1}}{(i+2){{d+1}\choose{i+2}}}\ge  c n^{d+1}  \]
for some constant $c=c(d,\ell,s)>0$ and all large enough $n$.

Note that
\[\mathbb{E}|T_{n,d,\ell}^\Sigma \cap E^\Sigma|\ge k c n^d\]
for all large enough $n$.

Thus, by Lemma~\ref{berstein}, we  have

\begin{align*}\mathbb{P}\left(|T_{n,d,\ell}^\Sigma \cap E^\Sigma|\le \frac{k c n^d}2\right)&\le \mathbb{P}\left(|T_{n,d,\ell}^\Sigma \cap E^\Sigma|\le \frac{1}2 \mathbb{E}|T_{n,d,\ell}^\Sigma \cap E^\Sigma|\right)\\&\le 2\exp\left(-\frac{1}{16}\mathbb{E}|T_{n,d,\ell}^\Sigma \cap E^\Sigma|\right)\\&\le 2\exp\left(-\frac{kcn^d}{16}\right). \end{align*}

Clearly,
\[\sum_{i=0}^{d-1}|C^i(\Delta_{[n]},\mathbb{F}_2)|\le d2^{n^d}.\]

Provided that $k$ is large enough, 
\begin{equation}
2^{n^d}\exp\left(-\frac{kcn^d}{16}\right)=O(n^{-s}).\label{bottleneck2}
\end{equation}
Thus, it follows from the union bound that with probability $1-O(n^{-s})$, we have
\[|T_{n,d,\ell}^\Sigma \cap E^\Sigma|\ge \frac{k c n^d}2\]
for all choices of $B$ like above. If the high probability event provided part~\eqref{part1} of  Lemma~\ref{Lemma17} occur with the choice $\varepsilon=\frac{c}4$, then

\[|T_{n,d,\ell}^\cup\cap E|\ge \frac{k c n^d}4.\]

Furthermore, on the high probability event provided by part~\eqref{part2} of Lemma~\ref{Lemma17}, we have $|T_{n,d,\ell}^\cup(d)|\le 2k n^d(d+1+\ell)$. Thus, on the intersection of all the event above

\[\|\delta_{i,\mathbb{F}_2}^{T_{n,d,\ell}} B\|\ge \frac{|T_{n,d,\ell}^\cup\cap E|}{{{d+1}\choose{i+2}}|T_{n,d,\ell}^\cup|}\ge \frac{\frac{k c n^d}4}{{{d+1}\choose{i+2}}2k n^d(d+1+\ell)}\ge \beta'\]
for the choice of $\beta'=\frac{c}{8\cdot 2^{d+1}}$.

Thus, \eqref{deltaB} follows.

Choosing $\beta(d,\ell,s)=\min(\beta',\bar{\varepsilon})$, and combining \eqref{kicsiexpansion}, \eqref{nagyexpansion} with \eqref{hiseconddef}, the theorem follows.

\end{proof}

\begin{remark}\label{remarkdouble2}
For the choice of $\alpha$ given by Evra and Kaufmann, $\alpha^{-1}$ is at least double exponential in $d$. Combining this with Remark~\ref{remarkbottleneck}, we see that the $k_d$ provided by our proof is at least double exponential in $d$. Note that \eqref{bottleneck2} also requires a double exponential bound on $k_d.$ Evra and Kaufman provide explicit expression for the constant $\alpha$, $\bar{\varepsilon}$ and $\bar{\mu}$ in Theorem~\ref{evrakaufman}. Thus, with some effort one can provide an explicit bound on $k_d$ in our proof. However, as we do not expect this constant to be optimal, we did not pursue this. 
\end{remark}

\bibliography{references}
\bibliographystyle{plain}

\bigskip

\noindent Andr\'as M\'esz\'aros, \\
HUN-REN Alfr\'ed R\'enyi Institute of Mathematics, \\Budapest, Hungary,\\ {\tt meszaros@renyi.hu}

\end{document}